\newcommand\blfootnote[1]{%
  \begingroup
  \renewcommand\thefootnote{}\footnote{#1}%
  \addtocounter{footnote}{-1}%
  \endgroup
}
\theoremstyle{plain}\newtheorem{thm}{Theorem}[section]
\theoremstyle{plain}\newtheorem{cor}[thm]{Corollary}
\theoremstyle{plain}\newtheorem{prop}[thm]{Proposition}
\theoremstyle{plain}\newtheorem{lem}[thm]{Lemma}
\theoremstyle{plain}
\theoremstyle{plain}
\theoremstyle{plain}
\theoremstyle{plain}
\theoremstyle{plain}
\newtheorem*{rep@theorem}{\rep@title}
\newcommand{\newreptheorem}[2]{%
\newenvironment{rep#1}[1]{%
 \def\rep@title{#2 \ref{##1}}%
 \begin{rep@theorem}}%
 {\end{rep@theorem}}}
\theoremstyle{definition}
\newtheorem{defn}[thm]{Definition}
\newtheorem{exmp}[thm]{Example}
\theoremstyle{remark}
\newtheorem{rem}[thm]{Remark}
\numberwithin{equation}{section}
\DeclareMathOperator{\rank}{rank}
\DeclareMathOperator{\II}{I}
\DeclareMathOperator{\SHI}{SHI}
\DeclareMathOperator{\AHI}{AHI}
\DeclareMathOperator{\muu}{\mu^{orb}}
\DeclareMathOperator{\HFK}{\widehat{HFK}}
\DeclareMathOperator{\HF}{\widehat{HF}}
\DeclareMathOperator{\SFH}{SFH}
\DeclareMathOperator{\Eig}{Eig}
\DeclareMathOperator{\KHI}{KHI}
\newcommand{\bC}{\mathbb{C}}
\newcommand{\bQ}{\mathbb{Q}}
\newcommand{\bZ}{\mathbb{Z}}
\newcommand{\Ga}{\Gamma}
\newcommand{\al}{\alpha}
\newcommand{\ga}{\gamma}
\newcommand{\ra}{\rightarrow}
\newcommand{\xra}{\xrightarrow}
\newcommand{\bpf}{\begin{proof}}
\newcommand{\epf}{\end{proof}}
\newcommand{\bthm}{\begin{thm}}
\newcommand{\ethm}{\end{thm}}
\newcommand{\bprop}{\begin{prop}}
\newcommand{\eprop}{\end{prop}}
\newcommand{\bcor}{\begin{cor}}
\newcommand{\ecor}{\end{cor}}
\newcommand{\blem}{\begin{lem}}
\newcommand{\elem}{\end{lem}}
\newcommand{\bdefn}{\begin{defn}}
\newcommand{\edefn}{\end{defn}}
\newcommand{\bexmp}{\begin{exmp}}
\newcommand{\eexmp}{\end{exmp}}
\newcommand{\brem}{\begin{rem}}
\newcommand{\erem}{\end{rem}}
\newcommand{\bdia}{\begin{displaymath}\xymatrix}
\newcommand{\edia}{\end{displaymath}}
\newcommand{\beq}{\begin{equation*}\begin{aligned}}
\newcommand{\eeq}{\end{aligned}\end{equation*}}
\newcommand{\intg}{\mathbb{Z}}
\author{Zhenkun Li}
\address{Department of Mathematics, Stanford University, California, 94305, USA}
\email{zhenkun@stanford.edu}
\author{Yi Xie}
\address{Beijing International Center for Mathematical Research, Peking University, Beijing 100871, China}
\email{yixie@pku.edu.cn}
\author{Boyu Zhang}
\address{Department of Mathematics, Princeton University, New Jersey 08544, USA}
\email{bz@math.princeton.edu}
\title{On Floer minimal knots in sutured manifolds}
\begin{document}

\begin{abstract}
Suppose $(M, \gamma)$ is a balanced sutured manifold and $K$ is a rationally null-homologous knot in $M$. It is known that the rank of the sutured Floer homology of $M\backslash N(K)$ is at least twice the rank of the sutured Floer homology of $M$. This paper studies the properties of $K$ when the equality is achieved for instanton homology. As an application, we show that if $L\subset S^3$ is a fixed link and $K$ is a knot in the complement of $L$, then the instanton link Floer homology of $L\cup K$ achieves the minimum rank if and only if $K$ is the unknot in $S^3\backslash L$.
\end{abstract}

\maketitle

\section{Introduction}

\blfootnote{
The second author was supported by 
National Key R\&D Program of China SQ2020YFA0712800 and NSFC 12071005.}
\makeatother

Suppose $Y$ is a closed oriented 3-manifold and $K$ is a rationally null-homologous knot in $Y$. The spectral sequence in \cite[Lemma 3.6]{OS:HFK} implies the following\footnote{In the original setup of \cite{OS:HFK}, the knot $K$ is assumed to be null-homologous in $\bZ$ instead of in $\bQ$, but the same spectral sequence still works for the latter case.}:
\begin{equation}\label{eq_HFK>HF}
\rank\HFK(Y,K; \bZ/2)\ge \rank \HF(Y; \bZ/2).
\end{equation} 
The knot $K$ is called \emph{Floer simple} or \emph{Floer minimal} if \eqref{eq_HFK>HF} attains equality (see \cite{hedden2011floer, rasmussen2007lens}). 
The properties of Floer minimal knots have been studied in  \cite{Ni-Wu:floer_simple,rasmussen2017floer,GL:simple_knots}. 

More generally, suppose $(M,\ga)$ is a balanced sutured 3-manifold and $K$ is a knot in the interior of $M$ such that $[K] = 0 \in H_1(M,\partial M; \bQ)$. Let $N(K)\subset M$ be an open tubular neighborhood of $K$. Let $\ga_K$ be the union of $\ga$ and a pair of oppositely oriented meridians
on $\partial N(K)$. Then the following inequality holds for sutured Heegaard Floer homology:
\begin{equation}\label{eq_SFHK>SFH}
\rank\SFH(M\backslash N(K), \ga_K; \bZ/2)\ge 2\cdot \rank \SFH(M,\ga; \bZ/2).
\end{equation} 

\begin{rem}
The proof of \eqref{eq_SFHK>SFH} is the same as \eqref{eq_HFK>HF}: a spectral sequence similar to the one in \cite[Lemma 3.6]{OS:HFK} implies that
$$\rank\SFH(M\backslash N(K), \ga_K; \bZ/2)\ge\rank\SFH(M\backslash B^3, \ga\cup\delta; \bZ/2),$$
where $B^3$ is a $3$--ball inside $M$ and $\delta$ is a  simple closed curve on $\partial B^3$. Inequality \eqref{eq_SFHK>SFH} then follows from \cite[Proposition 9.14]{Juhasz-holo-disk}.
 \end{rem} 
 
 To see that \eqref{eq_HFK>HF} follows from \eqref{eq_SFHK>SFH}, take $M = Y\backslash B^3$ and let $\ga$ be a simple closed curve on $\partial B^3$.
 Another interesting special case of \eqref{eq_SFHK>SFH} is when $M$ is a link complement.  Let $L$ be a link in $S^3$, let $N(L)$ be an open tubular neighborhood of $L$, let $M=S^3\backslash N(L)$, and let $\ga\subset \partial M$ be the union of a pair of oppositely oriented meridians for every component of $L$.  In this case, by \cite[Proposition 9.2]{Juhasz-holo-disk}, inequality \eqref{eq_SFHK>SFH} is equivalent to: 
 \begin{equation}\label{eq_HFK_L_K}
\rank \HFK(L\cup K; \bZ/2)\ge 2 \cdot \rank \HFK(L; \bZ/2),
\end{equation} 
where $K$ is an arbitrary knot in $S^3\backslash L$.
It is natural to ask about the properties of the pair $(L,K)$ when the equality of \eqref{eq_HFK_L_K} holds. This question has been studied by Ni \cite{Ni:rank_unlink} and Kim \cite{Kim}. Ni \cite{Ni:rank_unlink} proved that if $L$ is the unlink and the equality of \eqref{eq_HFK_L_K} holds, then $L\cup K$ is the unlink. (This property was then used in \cite[Proposition 1.4]{Ni:rank_unlink} to prove that the rank of $\HFK$ detects the unlink.) Kim \cite[Proposition 4]{Kim} proved that if the equality of \eqref{eq_HFK_L_K} holds and $L\cup K$ is non-split, then $L$ is non-split and the linking number of $K$ with every component of $L$ is zero.

It is conjectured by Kronheimer and Mrowka \cite[Conjecture 7.25]{KM:suture} that sutured \emph{Heegaard} Floer homology is isomorphic to sutured \emph{instanton} Floer homology.
This paper studies an analogue of \eqref{eq_SFHK>SFH} for instanton Floer homology.
From now on, all instanton Floer homology groups are defined with $\bC$--coefficients.
An inequality on instanton Floer homology analogous to \eqref{eq_SFHK>SFH} was proved by Li-Ye \cite[Proposition 3.14]{LY-HeegaardDiagram}:  let $(M,\ga)$, $K$, and $\ga_K$ be as above, we have 
\begin{equation}\label{eq_SHIK>SHI}
\dim_\bC\SHI(M\backslash N(K), \ga_K)\ge 2\cdot \dim_\bC \SHI(M,\ga).
\end{equation}
\brem
To see that \eqref{eq_SHIK>SHI} follows from \cite[Proposition 3.14]{LY-HeegaardDiagram},
 remove a small $3$--ball $B^3$ from $M$ centered on $K$ such that $B^3\cap K$ is an arc. Let $\delta$ be a simple closed curve on $\partial B^3$ separating the two points in $\partial B^3\cap K$. Consider the balanced sutured manifold $(M\backslash B^3,\ga\cup\delta)$, and let $T= K\backslash B^3$ be a vertical tangle in $M\backslash B^3$. Then $(M\backslash N(K),\ga_K)$ can be obtained from $(M\backslash B^3,\ga\cup\delta)$ by removing a neighborhood of $T$ and adding a meridian of $T$ to the suture. By \cite[Proposition 3.14]{LY-HeegaardDiagram} and \cite[Proposition 4.15]{L18-contact}, 
$$\dim_\bC\SHI(M\backslash N(K), \ga_K)\ge \dim_\bC \SHI(M\backslash B^3,\ga\cup \delta) = 2\cdot \dim_\bC \SHI(M,\ga).$$
\erem

In this paper, we will study the properties of $K$ when the equality of \eqref{eq_SHIK>SHI} is achieved.
To simplify notation, we introduce the following definition:

\begin{defn}
\label{def_instanton_minimal}
Suppose $(M,\ga)$ is a sutured manifold. Suppose $K$ is a knot in $M$ such that $[K] = 0 \in H_1(M,\partial M;\mathbb{Q})$. Let $N(K)\subset M$ be an open tubular neighborhood of $K$. Let $\ga_K$ be the union of $\ga$ and a pair of oppositely oriented meridians
on $\partial N(K)$.   We say that $K$ is \emph{instanton Floer minimal}, if 
$$
\dim_\bC\SHI(M\backslash N(K),\ga_K) = 2\cdot \dim_\bC \SHI(M,\ga).
$$
\end{defn}

Suppose $(M,\ga)$ is a connected balanced sutured manifold. 
Let $\{\gamma_i\}_{1\le i \le s}$ be the components of $\ga$, then the homotopy class of every $\gamma_i$ defines a conjugacy class $[\gamma_i]$ in $\pi_1(M)$. 
Define
\begin{equation}
\label{eqn_def_pi'}
	\pi_1'(M):=\pi_1(M)/\langle [\gamma_1],\cdots, [\gamma_s]  \rangle,
\end{equation}
where $\langle [\gamma_1],\cdots, [\gamma_s]  \rangle$ is the normal subgroup of $\pi_1(M)$ generated by the conjugacy classes $[\gamma_1]$, $\cdots$, $[\gamma_s]$. 

The first main result of this paper is the following theorem.
 \begin{thm}
 \label{thm_instanton_minimal_in_Y}
Let $(M,\ga)$ be a connected balanced sutured manifold with 
$$\SHI(M,\ga)\neq 0.$$
 Suppose $M$ is decomposed as 
 \begin{equation}
 \label{eq_irr_decompostion_M}
 	M = M_1\# \cdots \# M_k \# Y,
 \end{equation}
 where each $M_i$ is an irreducible sutured manifold (in particular, $\partial M_i$ is non-empty), and $Y$ is a closed manifold. Here, $Y$ is allowed to be $S^3$ or reducible.
 Let $K$ be a knot in the interior of $M$ that represents the zero  element in $\pi_1'(M)$.  If $K$ is instanton Floer minimal, then $K$ is contained in $Y$ after isotopy. 
 \end{thm}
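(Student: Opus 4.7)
The plan is to decompose the problem along the connect-sum structure of $M$ and reduce it to the statement that an instanton Floer minimal knot with trivial $\pi_1'$-class in an irreducible sutured manifold must lie in a $3$--ball. The first ingredient I would establish (or extract from the gluing theorems for sutured instanton Floer homology due to Kronheimer--Mrowka, Baldwin--Sivek, and Li--Ye) is a multiplicative connect-sum formula of the shape
\[
\dim_\bC\SHI(M,\ga)=\dim_\bC I^{\#}(Y)\cdot\prod_{i=1}^k \dim_\bC\SHI(M_i,\ga_i),
\]
together with its analogue for $\SHI(M\setminus N(K),\ga_K)$ when $K$ is contained in a single summand. Nonvanishing of $\SHI(M,\ga)$ will then force $\SHI(M_i,\ga_i)\ne 0$ for each $i$ and $I^{\#}(Y)\ne 0$.

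Next, I would isotope $K$ into a single summand. Pick a system of connect-sum spheres $S_1,\ldots,S_k$ realizing the decomposition \eqref{eq_irr_decompostion_M} and place $K$ in general position with respect to $\bigcup_i S_i$. The arcs of $K$ in the various summands determine a cyclic word in the free product
\[
\pi_1'(M)=\pi_1'(M_1)*\cdots*\pi_1'(M_k)*\pi_1(Y)
\]
whose class is zero by hypothesis. The normal-form theorem for free products then forces one such arc to represent the trivial element in the corresponding factor, i.e.\ to be null-homotopic rel endpoints modulo the sutures in its ambient piece; such an arc can be isotoped across the corresponding sphere, contradicting minimality of $|K\cap\bigcup_i S_i|$ unless $K$ already lies in one piece. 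If that piece is $Y$ we are done. Otherwise $K\subset M_{i_0}$ for some $i_0\le k$, $[K]=0$ in $\pi_1'(M_{i_0})$ by injectivity of free-product factors, and applying the connect-sum formula to $M\setminus N(K)$ together with the equality case of \eqref{eq_SHIK>SHI} shows that $K$ is instanton Floer minimal in $(M_{i_0},\ga_{i_0})$.

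The hard part---and the main obstacle---is the final step: showing that in an irreducible sutured manifold $(M_{i_0},\ga_{i_0})$ with $\SHI(M_{i_0},\ga_{i_0})\ne 0$, any instanton Floer minimal knot with $[K]=0$ in $\pi_1'(M_{i_0})$ is isotopic into a $3$--ball disjoint from the sutures. Granting this, the ball can be slid through $S_{i_0}$ into $Y$, producing the required isotopy. I expect the argument to rely on the spectral sequence underlying \eqref{eq_SHIK>SHI} from \cite[Proposition~3.14]{LY-HeegaardDiagram}: Floer minimality forces this spectral sequence to collapse at $E_1$, and this collapse, combined with the vanishing of $[K]$ in $\pi_1'(M_{i_0})$ and the existence of a Seifert-type surface for $K$ in $M_{i_0}$, should obstruct any nontrivial sutured-manifold decomposition of $M_{i_0}\setminus N(K)$ along such a surface, ultimately forcing $K$ to bound an embedded disk. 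This is in the spirit of, and will likely reuse techniques from, the existing rank-detection and unknot-detection results in instanton link Floer homology.
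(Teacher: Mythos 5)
There is a genuine gap, in fact two. First, your step that isotopes $K$ into a single summand does not work as stated: the normal-form theorem for the free product $\pi_1'(M_1)*\cdots*\pi_1'(M_k)*\pi_1(Y)$ only tells you that some arc of $K$ in a summand is \emph{homotopic} rel endpoints into the decomposing sphere, not \emph{isotopic} rel endpoints. An arc in a punctured irreducible piece (or even in a ball) can be null-homotopic rel endpoints while being geometrically essential, so you cannot reduce $|K\cap\bigcup_i S_i|$ by an ambient isotopy of $K$, and the purely topological claim implicit in your argument --- that any knot representing $0$ in $\pi_1'(M)$ is isotopic into one prime summand --- is false without using Floer minimality. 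Indeed, producing such an isotopy is essentially the content of the theorem, so it cannot be obtained from the group-theoretic hypothesis alone. Second, the step you yourself flag as the main obstacle (an instanton Floer minimal, $\pi_1'$-trivial knot in an irreducible sutured piece lies in a $3$--ball) is exactly where all the work lies, and the mechanism you propose --- a collapse at $E_1$ of a spectral sequence underlying \cite[Proposition 3.14]{LY-HeegaardDiagram} obstructing sutured decompositions of $M_{i_0}\backslash N(K)$ --- is not substantiated; no such spectral sequence argument is available in the instanton setting, where the inequality is proved by quite different means.

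For comparison, the paper's proof runs in the opposite order and never needs to isotope $K$ across spheres a priori. Floer minimality together with $[K]=0\in\pi_1'(M)$ is first converted into an algebraic statement: by Theorem \ref{thm_floer_simple_implies_SHI_free} (proved via the unoriented skein triangle for $K^{\natural}$, Xie's computation of the map as $\pm\sigma(q)$, and the local-coefficient rank inequality of Lemma \ref{lem_rank_inequality_K_zero_in_pi'}), $\SHI(M,\ga,K)$ is a nontrivial free $\bC[X]/(X^2)$--module. On the other hand, Theorem \ref{thm: free implies reducible, case of one taut sutured manifolds} shows, via a sutured manifold hierarchy and a Floer excision argument that kills the $\sigma(q)$--action, that irreducibility of $M\backslash K$ (with $\SHI(M,\ga)\neq 0$) forbids freeness. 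Hence $M\backslash K$ is reducible, which produces a sphere splitting off a summand containing $K$, and the theorem follows by the connected sum formula and induction on the number of prime factors. If you want to salvage your outline, you would need to replace both your sphere-intersection reduction and your final step by an argument of this kind (or some other genuinely new input); the connected-sum bookkeeping in your first and third steps is fine but is not where the difficulty lives.
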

 
 \begin{rem}
 If the knot $K$ represents the zero element in $\pi_1'(M)$, then $[K] = 0 \in H_1(M,\partial M;\bQ)$, and hence Definition \ref{def_instanton_minimal} is applicable to $K$.	
 \end{rem}

Moreover, when $Y=S^3$ in \eqref{eq_irr_decompostion_M}, we have the following result.

\begin{thm}
\label{thm_instanton_minimal_unknot}
Suppose $(M,\ga)$ is a balanced taut sutured manifold or a connected sum of balanced taut sutured manifolds.
Let
$K$ be a knot in the interior of $M$ that represents the zero element in $\pi_1'(M)$. Then $K$ is instanton Floer minimal if and only if $K$ is the unknot.
\end{thm}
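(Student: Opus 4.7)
The proof splits into two directions, and the plan is to use Theorem~\ref{thm_instanton_minimal_in_Y} to localize $K$ in a $3$-ball inside an $S^3$-summand of $M$, and then apply a connect sum formula for sutured instanton Floer homology together with the Kronheimer-Mrowka unknot detection theorem.

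For $K=U\Rightarrow$ Floer minimality, the unknot bounds an embedded disk in $M$ and therefore lies in a $3$-ball $B\subset M$ disjoint from $\gamma$. This realizes $(M\setminus N(K),\gamma_K)$ as a sutured connect sum $(M,\gamma)\,\#\,(S^3\setminus N(K),\mu)$, where the $S^2$--boundaries obtained after removing $B\subset M$ and a small ball in $S^3\setminus N(K)$ are glued. Using a connect sum formula for $\SHI$ (either from the sutured-instanton gluing framework of Li, or proved directly by decomposing along $\partial B$ with an auxiliary suture $\delta$ and applying the ``add-a-ball'' doubling identity from the remark preceding Definition~\ref{def_instanton_minimal}), I expect
\[
\dim_\bC\SHI(M\setminus N(K),\gamma_K) \;=\; 2\,\dim_\bC\SHI(M,\gamma)\cdot\dim_\bC\SHI(S^3\setminus N(K),\mu).
\]
For $K=U$, a product disk decomposition of the solid torus $S^3\setminus N(U)$ with its meridional suture shows that the final factor equals $1$, giving $\dim_\bC\SHI(M\setminus N(K),\gamma_K)=2\dim_\bC\SHI(M,\gamma)$, so $K$ is instanton Floer minimal.

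For Floer minimality $\Rightarrow K=U$, the tautness hypothesis combined with Kronheimer-Mrowka's non-vanishing theorem (and the connect sum formula above, handling the reducible case) gives $\SHI(M,\gamma)\neq 0$, so Theorem~\ref{thm_instanton_minimal_in_Y} applies. Since each $M_i$ in its prime decomposition is taut, hence irreducible, the closed summand $Y$ is forced to be $S^3$, and Theorem~\ref{thm_instanton_minimal_in_Y} yields that $K$ can be isotoped into this $S^3$-summand, i.e., into a $3$-ball $B\subset M$. Repeating the sutured connect sum identification and applying the same formula, the Floer-minimality of $K$ forces $\dim_\bC\SHI(S^3\setminus N(K),\mu)=1$, and the Kronheimer-Mrowka unknot detection theorem for sutured instanton knot homology then concludes that $K$ is the unknot in $S^3$, hence in $M$. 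The main obstacle in carrying out this plan is identifying and verifying the connect sum formula for $\SHI$ with its precise correction factor; once that is in hand, Theorem~\ref{thm_instanton_minimal_in_Y} and unknot detection immediately deliver both directions of the equivalence.
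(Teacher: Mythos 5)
Your proposal is correct and follows essentially the same route as the paper: use the non-vanishing theorem plus the connected sum formula to get $\SHI(M,\ga)\neq 0$, apply Theorem \ref{thm_instanton_minimal_in_Y} (with $Y=S^3$ since taut summands are irreducible) to isotope $K$ into a ball, then use the connected sum formula $\dim_\bC\SHI(M\backslash N(K),\ga_K)=2\dim_\bC\SHI(M,\ga)\cdot\dim_\bC\SHI(S^3\backslash N(K),\ga_\mu)$ and the identification $\SHI(S^3\backslash N(K),\ga_\mu)\cong\II^\natural(S^3,K)$ with Kronheimer--Mrowka's unknot detection. The correction factor you were unsure about is exactly the factor $2$ you wrote, as in \cite[Proposition 4.15]{L18-contact}, and your direct product-sutured-manifold computation for the unknot complement is a harmless substitute for the easy direction of the detection theorem.
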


Suppose $L$ is a link in $S^3$ and $K$ is a knot in $S^3\backslash L$. By \eqref{eq_SHIK>SHI},
\begin{equation}
\label{eqn_KHI_unknot_inequality}
\dim_\bC\KHI(L\cup K)\ge 2\cdot\dim_\bC \KHI(L).
\end{equation}
The following result characterizes all the pairs $(L,K)$ such that the equality of \eqref{eqn_KHI_unknot_inequality} is achieved.
\begin{thm}
\label{thm_KHI_unknot}
Let $L\subset S^3$ be a link and $K\subset S^3 \backslash L$ be a knot. Then 
\begin{equation}
\label{eqn_KHI_unknot}
	\dim_\bC\KHI(L\cup K) =  2\cdot \dim_\bC \KHI(L)
\end{equation}
if and only if $K$ is the unknot in $S^3\backslash L$.
\end{thm}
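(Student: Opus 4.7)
Let $(M,\gamma) = (S^3 \setminus N(L), \mu_L)$, where $\mu_L$ consists of a pair of oppositely oriented meridians on each boundary torus. By definition, $\KHI(L) = \SHI(M,\gamma)$ and $\KHI(L \cup K) = \SHI(M \setminus N(K), \gamma_K)$, so \eqref{eqn_KHI_unknot} is precisely the statement that $K$ is instanton Floer minimal in $(M,\gamma)$ in the sense of Definition \ref{def_instanton_minimal}. Moreover, quotienting $\pi_1(S^3 \setminus L)$ by the normal closure of the meridians recovers $\pi_1(S^3) = 1$, so $\pi_1'(M)$ is trivial and the hypothesis $[K] = 0 \in \pi_1'(M)$ is automatic.

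\textbf{Easy direction.} If $K$ is the unknot in $S^3 \setminus L$, then $K$ bounds an embedded disk disjoint from $L$ and $L \cup K$ is the split union $L \sqcup U$. The sutured manifold $(M \setminus N(K), \gamma_K)$ then splits as the sutured connect-sum $(M,\gamma) \# (S^3 \setminus N(U), \mu_U)$, and the K\"unneth-type connect-sum formula $\SHI(M_1 \# M_2) = \SHI(M_1) \otimes \SHI(M_2)$, together with $\dim_\bC \KHI(U) = 2$, yields $\dim_\bC \KHI(L \cup K) = 2 \dim_\bC \KHI(L)$.

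\textbf{Hard direction.} Assume now that $K$ is Floer minimal. Since $(M,\gamma)$ is connected and balanced with $\SHI(M,\gamma) = \KHI(L) \neq 0$, Theorem \ref{thm_instanton_minimal_in_Y} applies. In any decomposition $M = M_1 \# \cdots \# M_k \# Y$ as in that theorem, the closed summand $Y$ must be $S^3$: the prime factorization of the link complement $M$ corresponds to the split-link decomposition $L = L_1 \sqcup \cdots \sqcup L_m$, and every factor $S^3 \setminus N(L_i)$ has non-empty boundary. Theorem \ref{thm_instanton_minimal_in_Y} then places $K$, after isotopy, inside an embedded ball $B \subset M$ disjoint from $L$. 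Therefore $(M \setminus N(K), \gamma_K)$ is the sutured connect-sum $(M,\gamma) \# (S^3 \setminus N(K), \mu_K)$, and the connect-sum formula gives
\begin{equation*}
\dim_\bC \SHI(M \setminus N(K), \gamma_K) = \dim_\bC \SHI(M,\gamma) \cdot \dim_\bC \KHI(K).
\end{equation*}
Floer minimality forces $\dim_\bC \KHI(K) = 2$, so Kronheimer--Mrowka's instanton unknot detection theorem shows that $K$ is the unknot in $S^3$. Since $K$ lies in the ball $B$ disjoint from $L$, its bounding disk can be chosen inside $B$, and we conclude that $K$ is the unknot in $S^3 \setminus L$.

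\textbf{Main obstacle.} The central non-formal input is the multiplicative connect-sum formula for $\SHI$, invoked in both directions to separate the contribution of $K$ from that of $L$. With it, the argument combines the structural statement of Theorem \ref{thm_instanton_minimal_in_Y} with the instanton unknot detection theorem. An alternative route through Theorem \ref{thm_instanton_minimal_unknot} would need the ``connect-sum of taut balanced sutured manifolds'' hypothesis for $(S^3 \setminus N(L), \mu_L)$, which becomes delicate when $L$ has unknotted split components since $(D^2 \times S^1, 2\mu)$ is not taut; routing through Theorem \ref{thm_instanton_minimal_in_Y} sidesteps this concern.
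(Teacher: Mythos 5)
Your argument is essentially the paper's: the paper proves Theorem \ref{thm_KHI_unknot} in one step by citing Theorem \ref{thm_instanton_minimal_unknot}, and your ``hard direction'' (Theorem \ref{thm_instanton_minimal_in_Y} puts $K$ in a ball, then the connected sum formula plus Kronheimer--Mrowka's unknot detection finishes) is precisely the proof of Theorem \ref{thm_instanton_minimal_unknot} inlined, so you are not on a genuinely different route. Two points need fixing. First, your normalization is off by compensating factors of $2$: the connected sum formula used in this paper (via \cite[Proposition 4.15]{L18-contact}) is $\dim_\bC\SHI(M_1\# M_2,\ga_1\cup\ga_2)=2\cdot\dim_\bC\SHI(M_1,\ga_1)\cdot\dim_\bC\SHI(M_2,\ga_2)$, not a bare K\"unneth tensor product, and correspondingly $\dim_\bC\KHI(U)=1$ for the unknot, not $2$. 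Your two slips cancel numerically in the easy direction, but in the hard direction your conclusion ``$\dim_\bC\KHI(K)=2$, hence $K$ is the unknot by unknot detection'' is not what the detection theorem says (it says rank $1$ iff unknot); with the correct connected sum formula, Floer minimality gives $\dim_\bC\KHI(K)=1$ and the detection theorem applies as intended. Second, your stated reason for avoiding Theorem \ref{thm_instanton_minimal_unknot} is unfounded: for a split unknotted component, the relevant summand is the unknot complement with two meridional sutures, i.e. a solid torus whose sutures are \emph{longitudes} of that solid torus (the meridian of the unknot is core-parallel in its complement), which is a product sutured manifold and hence taut. So $(S^3\backslash N(L),\ga)$ is always a connected sum of taut balanced sutured manifolds, which is exactly why the paper's one-line proof via Theorem \ref{thm_instanton_minimal_unknot} works; your detour through Theorem \ref{thm_instanton_minimal_in_Y} is harmless but unnecessary. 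Finally, you should justify $\KHI(L)\neq 0$ (the paper cites \cite[Proposition 7.16]{KM:suture}, or alternatively tautness of each summand plus the connected sum formula) rather than assert it.
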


Theorem \ref{thm_KHI_unknot} has the following immediate corollary:

\begin{cor}
\label{cor_unlink_rank_detection}
	Suppose $L$ is an $n$--component link in $S^3$. Then $L$ is an unlink if and only if $\dim_\bC \KHI(L) = 2^{n-1}$.
\end{cor}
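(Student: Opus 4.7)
The plan is to induct on the number of components $n$, with Theorem~\ref{thm_KHI_unknot} doing all the analytic work and the $n=1$ case of the instanton unknot detection theorem (Kronheimer--Mrowka) supplying the base. For the forward ($\Rightarrow$) direction, I would peel a split unknot component off an $n$-component unlink $L$ to write $L = L'\sqcup K$; since $K$ bounds an embedded disk in $S^3\setminus L'$, the equality clause of Theorem~\ref{thm_KHI_unknot} gives $\dim_\bC \KHI(L) = 2\,\dim_\bC \KHI(L')$, and iterating from $\dim_\bC \KHI(\text{unknot})=1$ yields $\dim_\bC \KHI(L) = 2^{n-1}$.

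For the reverse ($\Leftarrow$) direction, the first step would be to promote the inequality \eqref{eqn_KHI_unknot_inequality} into a uniform lower bound by an analogous induction: every $n$-component link satisfies $\dim_\bC \KHI(L)\ge 2^{n-1}$, with the base case $\dim_\bC \KHI(K)\ge 1$ holding because $\KHI$ of any knot is nontrivial. Then, assuming $\dim_\bC \KHI(L) = 2^{n-1}$, I would fix any component $K$ and write $L = L'\cup K$; the squeeze
\[
2^{n-1} = \dim_\bC \KHI(L) \ge 2\,\dim_\bC \KHI(L') \ge 2\cdot 2^{n-2} = 2^{n-1}
\]
forces $\dim_\bC \KHI(L') = 2^{n-2}$ (so by the induction hypothesis $L'$ is an $(n-1)$-component unlink) and simultaneously forces equality in \eqref{eqn_KHI_unknot_inequality}, so Theorem~\ref{thm_KHI_unknot} says that $K$ bounds an embedded disk $D$ in $S^3\setminus L'$.

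It then remains to combine the two pieces of geometric information. A regular neighborhood of $D$ is a $3$-ball $B\subset S^3\setminus L'$ containing $K$ as an unknot in $B$; pairing disjoint spanning disks for the components of the unlink $L'$, chosen to miss $B$, with the disk $D$ for $K$ exhibits $L$ as an unlink. I do not anticipate any serious obstacle here: once Theorem~\ref{thm_KHI_unknot} is in hand, Corollary~\ref{cor_unlink_rank_detection} reduces to this short induction together with the elementary observation that an unknot in the complement of an unlink can be split off.
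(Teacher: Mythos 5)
Your proposal is correct and follows essentially the same route as the paper: both directions come down to iterating the inequalities \eqref{eqn_KHI_unknot_inequality} and \eqref{eqn_KHI_lower_bound} to force equality at every stage, invoking Theorem \ref{thm_KHI_unknot} to split off each component as an unknot, with the base case supplied by Kronheimer--Mrowka's unknot detection. The only cosmetic difference is that the paper runs the reverse direction as one chain of forced equalities over the sublinks $K_1\cup\cdots\cup K_m$ rather than an induction on the corollary itself, and, like you, it treats the final topological step (an unknot bounding a disk in the complement of an unlink splits off, so the union is an unlink) as standard.
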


\begin{proof}
Suppose $K$ is a knot in $S^3$, then by \cite[Proposition 7.16]{KM:suture}, we have
\begin{equation}
\label{eqn_KHI_lower_bound}
\dim_\bC\KHI(K)\ge 1.
\end{equation}
Moreover, 
$K$ is the unknot if and only if $\dim_\bC\KHI(K) = 1$. If $L$ is the unlink with $n$ components, then $\dim_\bC\KHI(L)= 2^{n-1}$ by Theorem \ref{thm_KHI_unknot} and induction on $n$. On the other hand, 
	suppose $L$ is a link with $n$ components such that $\dim_\bC\KHI(L) = 2^{n-1}$. Let $K_1,\cdots, K_n$ be the components of $L$. Then by \eqref{eqn_KHI_unknot_inequality} and \eqref{eqn_KHI_lower_bound}, we have
	$$
	\dim_\bC \KHI(K_1\cup\cdots\cup K_{m+1}) = 2\cdot \dim_\bC \KHI(K_1\cup\cdots\cup K_{m})
	$$
	for every $m<n$. Therefore by Theorem \ref{thm_KHI_unknot}, $K_{m+1}$ is an unknot in $$S^3\backslash (K_1\cup\cdots\cup K_{m})$$ 
	for every $m<n$, so $L$ is the unlink.
\end{proof}

Theorems \ref{thm_instanton_minimal_in_Y}, \ref{thm_instanton_minimal_unknot} and \ref{thm_KHI_unknot} will be proved in Section \ref{subsec_proof_thm}.

\section{Sutured manifolds and tangles}

\subsection{Balanced sutured manifolds and tangles}
This subsection reviews some terminologies on balanced sutured manifolds and tangles.

\bdefn[\cite{G:Sut-1}, \cite{Juhasz-holo-disk}]
Let $M$ be a compact oriented $3$-manifold with boundary. Suppose $\ga\subset \partial M$ is an embedded closed oriented $1$-submanifold such that 
\begin{enumerate}
	\item every component of $\partial M$ contains at least one component of $\ga$,
	\item $[\ga]=0\in H_1(\partial M;\intg).$
\end{enumerate}
Let $A(\ga)\subset \partial M$ be an open tubular neighborhood of $\ga$, and let $R(\ga)=\partial M\backslash A(\ga)$. 
Let $R_+(\ga)$ be the subset of $R(\ga)$ consisting of components whose boundaries have the same orientation as $\ga$, and let $R_-(\ga) = R(\ga) \backslash R_+(\ga)$. We say that $(M,\ga)$ is a {\it balanced sutured manifold} if it satisfies the following properties:
\begin{enumerate}
\item The $3$--manifold $M$ has no closed components.
\item $\chi(R_+(\ga))=\chi(R_-(\ga))$.	
\end{enumerate}
\edefn


\bdefn
\label{def_tangle}
Suppose $(M,\ga)$ is a balanced sutured manifold. 
\begin{enumerate}
	\item A {\it tangle} in $(M,\ga)$ is a properly embedded $1$-manifold $T\subset M$ so that $\partial T\cap \ga=\emptyset$. 
	\item A component $T_0$ of $T$ is called {\it vertical} if $\partial T_0\cap R_+(\ga)\neq\emptyset$ and $\partial T_0\cap R_-(\ga)\neq\emptyset$. 
	\item A tangle $T$ is called {\it vertical} if every component of $T$ is vertical. 
	\item A tangle $T$ is called {\it balanced} if $|T\cap R_+(\ga)| = |T\cap R_-(\ga)|$.
\end{enumerate}
\edefn

\brem
In the following, we will use the notation $T\subset (M,\ga)$ to indicate that the tangle $T$ is in $(M,\ga)$.
\erem

\brem
Note that a tangle $T\subset (M,\ga)$ is, a priori, un-oriented. When the tangle is vertical, we may orient each component of $T$ so that it goes from $R_+(\ga)$ to $R_-(\ga)$. 
\erem

\brem
The tangle $T$ in Definition \ref{def_tangle} is allowed to be empty.
\erem

\bdefn[\cite{Schar}]
Suppose $(M,\ga)$ is a balanced sutured manifold, $T\subset (M,\ga)$ is a (possibly empty) tangle, and $S\subset M$ is an oriented properly embedded surface so that $S$ intersects $T$ transversely. If $S$ is connected,  define the {\it $T$--Thurston norm} of $S$ to be
$$x_T(S)=\max\{|T\cap S|-\chi(S),0\}.$$
Here $|T\cap S|$ is the number of intersection points between $T$ and $S$. If $S$ is disconnected and
 $S_1$, ..., $S_n$ are the connected components of $S$, then define the {\it $T$--Thurston norm} of $S$ to be
$$x_T(S)=x_T(S_1)+...+x_T(S_n).$$
Suppose $N\subset \partial M$ is a subset. For a homology class $\al\in H_2(M,N;\intg)$, define the {\it $T$--Thurston norm} of $\al$ to be
\begin{align*}
	x_T(\al)= \min\{x_T(S)~|
	&~S{\rm~is~properly~embedded~in}~M,\\
	&~\partial S\subset N,~[S]=[\al]\in H_2(M,N;\intg)\}.
\end{align*}
A surface $S$ properly embedded in $M$ is said to be {\it $T$-norm-minimizing} if
$$x_T(S)=x_T([S]).$$
Here $[S]$ is the homology class in $H_2(M,\partial S;\intg)$ represented by $S$.

When $T=\emptyset$, we simply write $x_T$ as $x$.
\edefn

\bdefn[\cite{Schar}]\label{defn: taut sutured manifold}
Suppose $(M,\ga)$ is a balanced sutured manifold and $T\subset (M,\ga)$ is a (possibly empty) tangle. We say that $(M,\ga)$ is {\it $T$-taut}, if $(M,\ga,T)$ satisfies the following properties:
\begin{enumerate}
\item $M\backslash T$ is irreducible.
\item $R_{+}(\ga)$ and $R_-(\ga)$ are both incompressible in $M\backslash T$.
\item $R_{+}(\ga)$ and $R_-(\ga)$ are both $T$-norm-minimizing.
\item Every component of $T$ is either vertical or closed.
\end{enumerate}
When $T=\emptyset$, we simply say that $(M,\ga)$ is {\it taut}.
\edefn

\subsection{Sutured manifold hierarchy}
Suppose $(M,\ga)$ is a balanced sutured manifold and $T\subset (M,\ga)$ is a tangle. Suppose $S\subset M$ is an oriented properly embedded surface. Under certain mild conditions, Scharlemann \cite{Schar} generalized the concepts from \cite{G:Sut-1} and introduced a process to decompose the triple $(M,\ga,T)$ into a new triple $(M',\ga',T')$, and this process is called a \emph{sutured manifold decomposition}. Extending the work of Gabai in \cite{G:Sut-1}, Scharlemann \cite{Schar} also proved the existence of \emph{sutured manifold hierarchies} for a $T$-taut balanced sutured manifold $(M,\ga)$. In this paper, we will use a modified version of the sutured manifold hierarchy, which is presented in the current subsection. 

\bdefn[c.f. {\cite{GL-decomposition}}]\label{defn: admissible surface}
Suppose $(M,\ga)$ is a balanced sutured manifold and $T\subset (M,\ga)$ is a tangle. An oriented properly embedded surface $S\subset M$ is called {\it admissible} in $(M,\ga,T)$ if
\begin{enumerate}
\item Every component of $\partial S$ intersects $\ga$ nontrivially and transversely. 
\item For every component $T_0$ of $T$, choose an arbitrary orientation for $T_0$, then all intersections between $T_0$ and $S$ are of the same sign.
\item $S$ has no closed components.
\item The number $\frac{1}{2}|S\cap \ga|-\chi(S)$ is even. 
\end{enumerate}
\edefn

The following theorem is essentially a combination of \cite[Theorem 4.19]{Schar} and \cite[Theorem 8.2]{Juh:sut}. 
\bthm\label{thm: sutured manifold hierarchy}
Suppose $(M,\ga)$ is a balanced sutured manifold. Suppose $K$ is a knot in the interior of $M$ and view $K$ as a tangle in $(M,\ga)$. If $(M,\ga)$ is $K$--taut, then there is a finite sequence of sutured manifold decompositions
$$(M,\ga,K)\stackrel{S_1}{\leadsto}(M_1,\ga_1,K)\leadsto...\leadsto(M_{n-1},\ga_{n-1},K)\stackrel{S_n}{\leadsto}(M_n,\ga_n,T)$$
so that the following is true.
\begin{enumerate}
\item For $i=1,...,n$, the surface $S_i$ is an admissible surface in $(M_{i-1},\ga_{i-1})$. ($M_0=M$ and $\ga_0=\ga$.)
\item For $i=1,...,n-1$, we have $S_i\cap K=\emptyset$.
\item $T$ is a vertical tangle in $(M_n,\ga_n)$.
\item For $i=1,...,n-1$, $(M_i,\ga_i)$ is $K$-taut.
\item  $(M_n,\ga_n)$ is $T$-taut.
\end{enumerate}
\ethm
\bpf
The proof of the theorem is essentially a combination of the proofs of \cite[Theorem 4.19]{Schar} and \cite[Theorem 8.2]{Juh:sut}. We will only sketch the proof here and point out the adaptions to our current setup.

First, suppose there is a homology class $\al\in H_2(M,\partial M;\intg)$ so that $\partial_{*}(\al)\neq0\in H_1(\partial M;\intg)$. The argument in \cite[Section 3]{Schar} then produces a surface $S$ properly embedded in $M$ so that 
\begin{enumerate}
	\item $[S]=[\al]\in H_2(M,\partial M;\intg)$.
	\item The sutured manifold decomposition of $(M,\ga,K)$ along $S$ yields a taut sutured manifold.
\end{enumerate}
 Note that $S$ must have a non-empty boundary. We further perform the following modifications on $S$:
\begin{itemize}
\item[a)] Disregard any closed components of $S$. This makes the decomposition along $S$ resulting in a balanced sutured manifold.
\item[b)] Apply the argument in \cite[Section 5]{G:Sut-1} to make $S$ satisfying the following extra property: for every component $V$ of $R(\ga)$, the intersection of $\partial S$ with $V$ is a collection of parallel oriented non-separating simple closed curves or arcs. Note the argument in \cite[Section 5]{G:Sut-1} happens in a collar of $\partial M$ inside $M$, and the tangle $K$ is in the interior of $M$. So the argument applies to our case.
\item[c)] After step b), we can make $S$ admissible by further isotope $\partial S$ on $\partial M$ via positive stabilizations on $S$, in the sense of \cite[Definition 3.1]{L19:MinusVersion}. Note that by definition, a positive stabilization creates a pair of intersection points between $\partial S$ and $\ga$, and thus it is possible to modify $S$ so that every component of $S$ intersects $\ga$ and $\frac{1}{2}|S\cap \ga|-\chi(S)$ is even. By \cite[Lemma 3.2]{L19:MinusVersion}, we know that after positive stabilizations, the sutured manifold decomposition along $S$ still yields a $K$--taut (or $T$--taut, if $S\cap K\neq \emptyset$) balanced sutured manifold.  
\end{itemize}
Suppose $S_1$ is the resulting surface after the above modifications. If $S_1$ has a non-trivial intersection with $K$ then we are done. If $S_1\cap K=\emptyset$, we then know that $S_1$ is admissible and the sutured manifold decomposition
$$(M,\ga,K)\stackrel{S_1}{\leadsto}(M_1,\ga_1,K)$$
yields a $K$-taut balanced sutured manifold $(M_1,\ga_1)$.  We can run the above argument repeatedly. By \cite[Section 4]{Schar}, the triple $(M_1,\ga_1,K)$ has a smaller complexity than $(M,\ga,K)$, so this decomposition procedure must end after finitely many steps. There are two possibilities when the procedure ends:

{\bf Case 1}. We have a surface $S_n$ that intersects $K$ nontrivially, and $S_1$, ..., $S_{n-1}$ are all disjoint from $K$. After the decomposition along $S_n$, the knot $K$ becomes a collection of arcs $T$ inside $(M_n,\ga_n)$, and we have a sutured manifold decomposition
$$(M_{n-1},\ga_{n-1},K)\stackrel{S_n}{\leadsto}(M_n,\ga_n,T)$$
so that $(M_n,\ga_n)$ is $T$-taut. Note that $T$ has no closed components and by Condition (4) in Definition \ref{defn: taut sutured manifold}, $T$ is a vertical tangle. Hence the desired properties of the theorem are verified.

{\bf Case 2}. All decomposing surfaces $S_n$ are disjoint from $K$, and we end up with a balanced sutured manifold $(M_n,\ga_n)$ so that 
$$\partial_*: H_2(M_n,\partial M_n;\intg) \to H_1(\partial M_n; \intg)$$ 
is the zero map. In this case, the map
$$\partial_*: H_1(\partial M_n;\intg) \to H_1(M_n; \intg)$$ 
is injective.
 It then follows from a straightforward diagram chasing (see, for example, \cite[Lemma 3.5]{hatcher2007notes}) that $\partial M_n$ must be a union of $2$-spheres. Let $\hat M_n$ be the component of $M_n$ that contains $K$, and let $\hat S$ be a sphere in $\hat M_n$ that is parallel to one of the boundary components. Then $\hat S$ is a reducing sphere for $\hat M_n\backslash K$, therefore $(M_n,\ga_n)$ cannot be $K$--taut, which yields a contradiction. 
\epf

\subsection{Instanton Floer homology}\label{subsec: construction of SHI} Suppose $(M,\ga)$ is a balanced sutured manifold. Kronheimer and Mrowka constructed the sutured instanton Floer homology for the couple $(M,\ga)$ in \cite{KM:suture}. Given a balanced tangle $T\subset (M,\ga)$ (see Definition \ref{def_tangle}), an instanton Floer theory for the triple $(M,\ga,T)$ was constructed in \cite{XZ:excision}. In this subsection, we review the constructions of these instanton Floer theories.

Suppose $(M,\ga)$ is a balanced sutured manifold and $T\subset (M,\ga)$ is a balanced tangle. We construct a tuple $(Y,R,L,u)$ as follows: Pick a connected surface $F$ so that 
$$\partial F\cong-\ga.$$
Pick $n\geq2$ points $p_1$, ..., $p_n$ on $F$. Let $u$ be an embedded simple arc connecting $p_1$ and $p_2$. Glue $[-1,1]\times F$ to $M$ along $\overline{A(\ga)}$:
$$\widetilde{M}:=M\bigcup_{\overline{A(\ga)}\sim[-1,1]\times \partial F} [-1,1]\times F,$$
where $\sim$ is given by a diffeomorphism from $\overline{A(\ga)}$ to $[-1,1]\times\ga$.
The boundary of $\widetilde{M}$ consists of two components:
$$\partial \widetilde{M}= R_+\cup R_-,$$
where
$$R_{\pm}=R_{\pm}(\ga)\cup \{\pm 1\}\times F$$
are closed surfaces.
Let 
$$\widetilde{T}=T\cup [-1,1]\times\{p_1,...,p_n\}.$$

Now pick an orientation-preserving diffeomorphism
$$f: R_+\ra R_-$$
so that $f(\widetilde{T}\cap R_+)=\widetilde{T}\cap R_-$ and $f(\{1\}\times u) = \{-1\}\times u$. Gluing $R_+$ to $R_-$ via $f$ yields a closed oriented $3$-manifold $Y$. Let $R\subset Y$ be the image of $R_{\pm}$. Let $L\subset Y$ be the image of $T$, then $L$ is a link in $Y$. We will also abuse notation and let $u$ denote the image of $\{\pm1\}\times u\subset [-1,1]\times F$ in $Y$.
\bdefn
The tuple $(Y,R,L,u)$ is called a {\it closure} of $(M,\ga,T)$.
\edefn

By the construction of Kronheimer and Mrowka in \cite{KM:Kh-unknot}, there is an instanton homology group $\II(Y,L,u)$, which is a finite-dimensional $\bC$--vector space,  associated to the triple $(Y,L,u)$. The surface $R$ induces a complex linear map
$$\muu(R): \II(Y,L,u)\ra \II(Y,L,u).$$
Given a point $p\in Y\backslash L$, there is a complex linear map
$$\mu(p): \II(Y,L,u)\ra \II(Y,L,u).$$
$\muu(R)$ and $\mu(p)$ are commutative to each other. 
From now on, if $\mu$ is a linear map on $\II(Y,L,u)$, we will use 
$\Eig(\mu,\lambda)$ to denote the \emph{generalized} eigenspace of $\mu$ with eigenvalue $\lambda$ in $\II(Y,L,u)$, 

\bdefn[{\cite[Definition 7.6]{XZ:excision}}]\label{defn: SHI for (M,gamma,T)}
Suppose $(M,\ga)$ is a balanced sutured manifold, and $T\subset (M,\ga)$ is a balanced tangle. 
Let $(Y,R,L,u)$ be an arbitrary closure of $(M,\ga,T)$ such that $|L\cap R|$ is odd and at least $3$ (this can always be achieved by increasing the value of $n$ in the construction of the closure). Let $p \in  Y\backslash L$ be a point.
Then the {\it sutured instanton Floer homology} of the triple $(M,\ga,T)$, which is denoted by $\SHI(M,\ga,T)$, is defined as
$$
\SHI(M,\ga,T)= \Eig\Big(\muu(R),|L\cap R|-\chi(R)\Big)\cap \Eig\Big(\mu(p),2\Big) \subset \II(Y,L,u)
$$
\edefn

The next statement guarantees that the isomorphism class of $\SHI(M,\ga,T)$ is well-defined:

\bthm[{\cite[Proposition 7.7]{XZ:excision}}]\label{thm: isomorphism class of SHI as vector space is well-defined}
For a triple $(M,\ga,T)$, the isomorphism class of the sutured instanton Floer homology $\SHI(M,\ga,T)$ as a complex vector space is independent of the choice of the closure of $(M,\ga,T)$ and the point $p$.
\ethm

We also have the following non-vanishing theorem:
\bthm[{\cite[Theorem 7.12]{XZ:excision}}]\label{thm: nonvanishing of SHI}
Suppose $(M,\ga)$ is a balanced sutured manifold and $T\subset (M,\ga)$ is a balanced tangle so that $(M,\ga)$ is $T$--taut. Then we have
$$\SHI(M,\ga,T)\neq0.$$
\ethm

Now we establish a theorem relating sutured instanton Floer homology to sutured manifold decompositions, which is  analogous to \cite[Proposition 6.9]{KM:suture}:
\bthm
Suppose $(M,\ga)$ is a balanced sutured manifold and $T\subset (M,\ga)$ is a balanced tangle. Suppose further that $S\subset (M,\ga)$ is an admissible surface in the sense of Definition \ref{defn: admissible surface} and we have a sutured manifold decomposition:
$$(M,\ga,T)\stackrel{S}{\leadsto}(M',\ga',T').$$
Then there is a closure $(Y,R,L,u)$ of $(M,\ga,T)$ and a closure $(Y,R',L,u)$ of $(M',\ga',T')$ so that the two closures share the same $3$-manifold $Y$, the same link $L$ and the same arc $u$. Furthermore, there is a closed oriented surface $\bar{S}\subset Y$ extending $S\subset (M,\ga)$ so that $R'$ is obtained from $R$ and $\bar{S}$ by a double curve surgery in the sense of \cite[Definition 1.1]{Schar}.  Finally, let $p\in  Y\backslash L$ be a point,  then
we have
\begin{equation}\label{eq: SHI and surface decomposition, 1}
\SHI(M,\ga,T)=\Eig\Big(\muu(R),|L\cap R|-\chi(R)\Big)\cap \Eig\Big(\mu(p),2\Big)
\end{equation}
and
\begin{equation}\label{eq: SHI and surface decomposition, 2}
\begin{aligned}
	\SHI(M',\ga',T')=&\Eig\Big(\muu(R),|L\cap R|-\chi(R)\Big)\cap \Eig\Big(\mu(p),2\Big)\\
	&\cap\Eig\Big(\muu(\bar{S}),|L\cap \bar{S}|-\chi(\bar{S})\Big)
\end{aligned}	
\end{equation}
\ethm
\bpf
The proof of the theorem is almost the same as that of \cite[Proposition 6.9]{KM:suture}: the constructions of the closure $(Y,R,L,u)$ and the extension $\bar{S}$ follows verbatim from the constructions in the proof of \cite[Proposition 6.9]{KM:suture}. Equation (\ref{eq: SHI and surface decomposition, 1}) is precisely the definition of $\SHI(M,\ga,T)$ in Definition \ref{defn: SHI for (M,gamma,T)}. The verification of Equation (\ref{eq: SHI and surface decomposition, 2}) is also similar to the proof of \cite[Proposition 6.9]{KM:suture}, except that the surface in the current setup might intersect $L$ transversely at finitely many points, so we need to replace \cite[Corollary 7.2]{KM:suture} in the proof of \cite[Proposition 6.9]{KM:suture} by \cite[Proposition 6.1]{XZ:excision} for our current setup.
\epf

\brem
The original proof of \cite[Proposition 6.9]{KM:suture} only works with a surface $S\subset M$ where $\frac{1}{2}|S\cap \ga|-\chi(S)$ is even (or else it is impossible to extend $S$ to a closed surface $\bar{S}$ in the closures of $(M,\ga)$.) This is the reason why we needed to introduce the definition of admissible surfaces in Definition \ref{defn: admissible surface}. 
This subtlety was not explicitly mentioned in \cite{KM:suture}. 
\erem

\subsection{A module structure on $\SHI$}
\label{subsec_module_structure}
We introduce an additional module structure on $\SHI(M,\ga,T)$ when $T\neq \emptyset$.  

As before, suppose $(M,\ga)$ is a balanced sutured manifold and $T\subset (M,\ga)$ is a balanced tangle. Pick an arbitrary closure $(Y,R,L,u)$ of $(M,\ga)$ such that $|L\cap R|$ is odd and at least $3$ (so that the condition of Definition \ref{defn: SHI for (M,gamma,T)} is satisfied). Pick a point $p\in Y\backslash L$ and a point $q\in T$. By \cite[(2), Proposition 2.1]{XZ:excision},  the point $q$ induces another complex linear map
$$\sigma(q): \II(Y,L,u)\ra \II(Y,L,u)$$
and we have $\sigma(q)^2=0$. Moreover, $\sigma(q)$ commutes with $\mu(R)$ and $\mu(p)$. Hence $\sigma(q)$ defines a  complex linear map on 
$$\SHI(M,\ga,T)=\Eig\Big(\muu(R),|L\cap R|-\chi(R)\Big)\cap\Eig\Big(\mu(p),2\Big)$$
that squares to zero.
Therefore, we can view $\SHI(M,\ga,T)$ as a $\mathbb{C}[X]/(X^2)$--module where the action of $X$ is defined by $\sigma(q)$. The proof of {\cite[Theorem 7.12]{XZ:excision}} applies verbatim to verify the following theorem:

\bthm\label{thm: isomorphism class of SHI as module is well-defined}
For a triple $(M,\ga,T)$, the isomorphism class of the sutured instanton Floer homology $\SHI(M,\ga,T)$ as a $\mathbb{C}[X]/(X^2)$--module is independent of the choice of the closures of $(M,\ga,T)$. Moreover, if $q,q'\in T$ are in the same component of $T$, then the module structure induced by $q$ is isomorphic to the module structure induced by $q'$. \qed
\ethm

For $q\in T$, we will call the $\bC[X]/(X^2)$--module structure on $\SHI(M,\ga,T)$ defined as above the \emph{module structure  induced by $q$}. 

\begin{rem}
\label{rem_module_struct_one_component}
If $T$ has only one component, then the isomorphism class of the module structure is independent of $q$. In this case, it makes sense to refer to the isomorphism class of the module structure on $\SHI(M,\ga,T)$ without specifying the point $q$.
\end{rem}

\bcor\label{cor_free_summand_under_surface_decomposition}
Suppose $(M,\ga)$ is a balanced sutured manifold and $T\subset (M,\ga)$ is a balanced tangle. Suppose further that $S\subset (M,\ga)$ is an admissible surface in the sense of Definition \ref{defn: admissible surface} and we have a sutured manifold decomposition:
$$(M,\ga,T)\stackrel{S}{\leadsto}(M',\ga',T').$$
Suppose $q\in T$, and let $q'\in T'$ be the image of $q$ in $M'$. 
If the module structure on $\SHI(M,\ga,T)$ induced by $q$ is a free $\mathbb{C}[X]/(X^2)$--module, then so is the module structure on $\SHI(M',\ga',T')$ induced by $q'$.
\ecor
\bpf
This follows directly from Equation (\ref{eq: SHI and surface decomposition, 2}) and the fact that the action of $X$ commutes with the action of $\mu(\bar{S})$.
\epf

\subsection{$\SHI$ with local coefficients}
\label{subsec_SHI_with_local_coefficients}
We also introduce a version of sutured instanton Floer homology defined with local coefficients. Let $(M,\ga)$ be a balanced sutured manifold and let $T\subset (M,\ga)$ be a tangle. Suppose $L\subset T$ is a link in the interior of $M$.   Let $(Y,R,L',u)$ be a closure of $(M,\ga,T)$ such that $|L'\cap R|$ is odd and at least $3$. 

Let $\Ga_L$ be the local system for the triple $(Y,L',u)$ associated with $L\subset L'$ as defined in \cite[Section 2]{XZ:forest}. By definition, $\Ga_L$ is a local system over the ring $\mathcal{R}=\bC[t,t^{-1}]$.
	For $h\in\bC$, define $\Ga_L(h) = \Ga_L\otimes \mathcal{R}/(t-h)$, then $\Ga_L(h)$ is a local system over the ring $\mathcal{R}/(t-h)\cong \bC$, and hence $\II(Y,L',u; \Ga_L(h))$ is a $\bC$--linear space. Let $p\in Y\backslash L'$ be a point.
\begin{defn}
 Define
$ \SHI(M,\ga,T; \Gamma_L(h)) $
to be the simultaneous generalized eigenspace of $\muu(R)$ and $\mu(p)$ in $\II(Y,L',u; \Ga_L(h))$ with eigenvalues $|L'\cap R|-\chi(R)$ and $2$ respectively.  
\end{defn}
Since $L\cap R=\emptyset$, the proof of {\cite[Theorem 7.12]{XZ:excision}} applies verbatim to verify that: 

\bthm\label{thm: isomorphism class of SHI with local coefficien is well-defined}
For $M,\ga,T,L$ as above and $h\in \bC$, the isomorphism class of the sutured instanton Floer homology $\SHI(M,\ga,T; \Ga_L(h))$ as a $\bC$--linear space is well-defined. In other words, it is independent of the choice of the closures of $(M,\ga,T)$ and the point $p$. \qed
\ethm

\begin{lem}
	\label{lem:_limit_as_h_to_1}
	$$
	\limsup_{h\to 1} \, \dim_\bC \SHI(M,\ga,T;\Ga_L(h)) \le \dim_\bC \SHI (M,\ga,T).
	$$
\end{lem}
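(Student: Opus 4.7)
The plan is to realize $\SHI(M,\ga,T;\Ga_L(h))$ uniformly, as $h$ varies, as the simultaneous generalized eigenspace of $\muu(R)$ and $\mu(p)$ in the homology of a chain complex defined over $\mathcal{R} = \bC[t,t^{-1}]$, and then to establish the desired upper bound at $h = 1$ via the universal coefficient theorem combined with the classical semi-continuity of fiber dimensions of coherent sheaves on $\mathrm{Spec}\,\mathcal{R}$.

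First I would choose a chain-level model $(C_*, d)$ of finitely generated free $\mathcal{R}$-modules computing $\II(Y,L',u;\Ga_L)$, equipped with commuting $\mathcal{R}$-linear chain maps lifting $\muu(R)$ and $\mu(p)$. For each $h \in \bC^*$, the specialization $C_h := C_* \otimes_{\mathcal{R}} \mathcal{R}/(t-h)$ has homology $\II(Y,L',u;\Ga_L(h))$; at $h = 1$ the local system $\Ga_L(1)$ is trivial, so $H_*(C_1) = \II(Y,L',u)$ and $\SHI(M,\ga,T;\Ga_L(1)) = \SHI(M,\ga,T)$. Since $\mathcal{R}$ is a PID and $C_*$ is free, the universal coefficient theorem yields a short exact sequence, equivariant under $\muu(R)$ and $\mu(p)$,
\begin{equation*}
0 \to H \otimes_{\mathcal{R}} \bC_h \to \II(Y,L',u;\Ga_L(h)) \to \mathrm{Tor}_1^{\mathcal{R}}(H_{*-1}, \bC_h) \to 0,
\end{equation*}
where $H := H_*(C_*)$ and $\bC_h := \mathcal{R}/(t-h)$. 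Applying the exact functor of taking the simultaneous generalized eigenspace at $(\lambda_0, 2) := (|L' \cap R| - \chi(R), 2)$ gives
\begin{equation*}
0 \to (H \otimes \bC_h)_{(\lambda_0, 2)} \to \SHI(M,\ga,T;\Ga_L(h)) \to \mathrm{Tor}_1^{\mathcal{R}}(H_{*-1}, \bC_h)_{(\lambda_0, 2)} \to 0.
\end{equation*}

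Next I would pin down the generic value. Let $\mathcal{E} \subset H$ be the $\mathcal{R}$-submodule on which $\muu(R) - \lambda_0$ and $\mu(p) - 2$ act nilpotently; by Noetherianness it is finitely generated, and base change to $\bC(t)$ identifies $\mathcal{E} \otimes_{\mathcal{R}} \bC(t)$ with the generalized eigenspace of $H \otimes \bC(t)$ at $(\lambda_0, 2)$. Setting $r := \rank_{\mathcal{R}} \mathcal{E}$, the Tor term vanishes outside the finite support of the torsion of $H_{*-1}$, and outside another finite exceptional set the natural map $\mathcal{E} \otimes \bC_h \to (H \otimes \bC_h)_{(\lambda_0, 2)}$ is an isomorphism with source of dimension $r$. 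Consequently there is a punctured neighborhood $V \setminus \{1\}$ of $1$ on which $\dim_{\bC} \SHI(M,\ga,T;\Ga_L(h)) = r$, so $\limsup_{h \to 1} \dim_{\bC} \SHI(M,\ga,T;\Ga_L(h)) = r$.

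The final step is to verify $\dim_{\bC} (H \otimes \bC_1)_{(\lambda_0, 2)} \ge r$ via upper semi-continuity in $h$. Choosing an integer $N$ uniformly large in $h$, which is possible because $H$ has finite $\mathcal{R}$-rank and its torsion part has finite total length, one writes $(H \otimes \bC_h)_{(\lambda_0, 2)}$ as the kernel of the specialization of the $\mathcal{R}$-linear map $\phi\colon H \to H \oplus H$, $v \mapsto ((\muu(R) - \lambda_0)^N v,\,(\mu(p) - 2)^N v)$. The dimension of this kernel equals $\dim_{\bC}(H \otimes \bC_h) - \rank(\phi \otimes \bC_h)$; the first term is upper semi-continuous by the classical semi-continuity theorem for coherent sheaves, and the rank is lower semi-continuous, so the difference is upper semi-continuous. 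Plugging $\dim_{\bC}(H \otimes \bC_1)_{(\lambda_0, 2)} \ge r$ into the exact sequence at $h = 1$ and using non-negativity of the Tor term gives $\dim_{\bC}\SHI(M,\ga,T) \ge r$, which combined with the previous paragraph proves the lemma. The main technical obstacle is the uniform choice of $N$ together with verifying that $\mathcal{E} \otimes \bC_h \cong (H \otimes \bC_h)_{(\lambda_0, 2)}$ on a cofinite subset of $\bC^*$; both are standard consequences of $\mathcal{R}$ having Krull dimension one, but they require careful bookkeeping.
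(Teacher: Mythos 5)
Your overall strategy is sound, but it is worth knowing that the paper's own proof is essentially a citation: it invokes Lemma~4.1(1) of \cite{XZ:forest} together with the single observation that $\Ga_L(1)$ is the trivial local system, so that $\SHI(M,\ga,T;\Ga_L(1)) = \SHI(M,\ga,T)$. What you have written is, in effect, a reconstruction of the proof of that cited lemma: a finitely generated free chain model over $\mathcal{R}=\bC[t,t^{-1}]$ whose specializations compute $\II(Y,L',u;\Ga_L(h))$, the universal coefficient sequence over the PID $\mathcal{R}$, exactness of passing to the simultaneous generalized eigenspace, generic constancy of the dimension, and upper semi-continuity at $h=1$. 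So the mathematics is the same in substance; the difference is that you carry the burden the paper delegates to the reference. Note in particular that your very first step --- the existence of $\mathcal{R}$-linear chain-level lifts of $\muu(R)$ and $\mu(p)$ compatible with base change $t\mapsto h$ --- is precisely the nontrivial instanton-theoretic input that the citation to \cite{XZ:forest} is meant to supply; in a self-contained write-up you would either have to justify it or cite it, since it is not formal.

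There is also one step whose justification, as stated, is not quite right, although the conclusion is. You compute $\dim\ker(\phi\otimes\bC_h)$ as $\dim(H\otimes\bC_h)-\operatorname{rank}(\phi\otimes\bC_h)$ and appeal to lower semi-continuity of the rank; but lower semi-continuity of fiberwise rank is a statement about maps of \emph{locally free} sheaves, and $H=H_*(C_*)$ will in general have $(t-1)$-torsion, in which case the fiberwise rank of a map of coherent sheaves can jump up at the special point (already $\operatorname{id}\colon \mathcal{R}/(t-1)\to\mathcal{R}/(t-1)$ has rank $0$ generically and $1$ at $h=1$). The fix is easy and keeps your conclusion: let $T_{\mathrm{tor}}\subset H$ be the torsion submodule, which is preserved by the operators. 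For $h$ near $1$ but $h\neq 1$ one has $H\otimes\bC_h\cong (H/T_{\mathrm{tor}})\otimes\bC_h$ equivariantly, and for the free module $H/T_{\mathrm{tor}}$ your matrix-minor argument does give upper semi-continuity of the joint generalized eigenspace dimension; at $h=1$ the exact sequence $0\to T_{\mathrm{tor}}\otimes\bC_1\to H\otimes\bC_1\to (H/T_{\mathrm{tor}})\otimes\bC_1\to 0$ and exactness of the eigenspace functor show the special fiber can only be larger. With that repair, and with the chain-level input either proved or cited, your argument is complete and proves the lemma.
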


\begin{proof}
	This is an immediate consequence of \cite[Lemma 4.1 (1)]{XZ:forest} and the observation that $\Ga_L(1)$ is the trivial local system over $\bC$, thus
	$$
	\SHI (M,\ga,T;\Ga_L(1)) = \SHI (M,\ga,T).
	\phantom\qedhere\makeatletter\displaymath@qed
	$$
\end{proof}

\begin{lem}
\label{lem:_isomorphism_class_of_SHI_with local coefficient is invariant under homotopy}
Suppose $T_0$ is a tangle in $(M,\ga)$, and $L, L'\subset M$ are two links in the interior of $M$ that are homotopic to each other in $M$ and are both disjoint from $T_0$.  Let $T = T_0\cup L$, $T'=T_0\cup L'$, and suppose $h \notin \{\pm1\}$. Then 
$$
\dim_\bC (M,\ga,T;\Ga_L(h)) = \dim_\bC (M,\ga,T';\Ga_{L'}(h))
$$
\end{lem}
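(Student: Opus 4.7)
The plan is to reduce the claim to invariance of $\dim_\bC \SHI(M,\ga,T_0\cup \bullet\,;\Ga_\bullet(h))$ under elementary moves --- ambient isotopies of the link, self-crossing changes of $L$, and (if needed) crossing changes of a strand of $L$ across a strand of $T_0$ --- and then to check each move separately. By a standard general position argument for homotopies of $1$--manifolds inside a $3$--manifold, after a small perturbation any homotopy from $L$ to $L'$ in $M$ may be assumed to be an ambient isotopy except at finitely many instants $t_1 < \cdots < t_k$, at each of which either a single self-intersection of $L$ or a single intersection between $L$ and $T_0$ appears transversely and is then resolved. This decomposes the homotopy into finitely many ambient isotopies interleaved with single crossing changes supported in small $3$--balls.

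The ambient-isotopy case is immediate: such an isotopy is realized by a compactly supported self-diffeomorphism of $M$ fixing $T_0$ and $\ga$, which lifts to a diffeomorphism of closures intertwining the local systems $\Ga_L$ and $\Ga_{L'}$, so the equality of dimensions follows from Theorem \ref{thm: isomorphism class of SHI with local coefficien is well-defined}.

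For a single crossing change inside a $3$--ball $B \subset M$, the plan is to apply the unoriented skein exact triangle for instanton Floer homology, extended to the local coefficient setting as developed in \cite{XZ:forest}, to the two resolutions $L_+$ and $L_-$ of the crossing. The triangle produces a pair of saddle cobordism maps $f\colon \SHI(L_+;\Ga(h)) \to \SHI(L_-;\Ga(h))$ and $g$ in the opposite direction, and a standard neck-stretching computation shows that the round--trip compositions $g\circ f$ and $f\circ g$ act as multiplication by a fixed Laurent polynomial in $h$ whose zero set is exactly $\{\pm 1\}$. For $h \notin \{\pm 1\}$ this scalar is invertible, so both $f$ and $g$ are isomorphisms, giving
$$
\dim_\bC \SHI(M,\ga,T_0\cup L_+;\Ga_{L_+}(h)) \;=\; \dim_\bC \SHI(M,\ga,T_0\cup L_-;\Ga_{L_-}(h)).
$$

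The main technical obstacle will be the skein step. The three resolutions $L_+, L_-, L_0$ have different numbers of components, so the local systems $\Ga_{L_\pm}$ and $\Ga_{L_0}$ live over topologically distinct link complements and must be fit compatibly into a single exact triangle at the closure level, respecting the eigenspace decomposition for $\muu(R)$ and $\mu(p)$ from Definition \ref{defn: SHI for (M,gamma,T)}. One must then identify the Laurent polynomial that controls the round--trip composition and verify that its vanishing locus is exactly $\{\pm 1\}$, which is what makes the hypothesis $h \notin \{\pm 1\}$ tight. A structurally identical argument handles crossing changes involving a strand of $T_0$, where the oriented resolution $L_0$ merges a component of $L$ with a strand of $T_0$ and the local system transforms accordingly. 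All of these are closure-level computations in the spirit of \cite{XZ:forest}.
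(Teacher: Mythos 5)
The paper's own proof of this lemma is a one-line citation: the statement is exactly the homotopy-invariance property of instanton homology with generic local coefficients established in \cite[Proposition 3.2]{XZ:forest}. Your plan is, in effect, an outline of how one would re-prove that cited proposition (reduce a generic homotopy to isotopies plus finitely many crossing changes of $L$ through itself or through $T_0$, handle isotopies by naturality of closures, handle crossing changes by showing the associated maps are isomorphisms when $h\neq\pm 1$). That is the right overall shape, and your isotopy step is fine. But as a proof it has a genuine gap precisely at the decisive step: the claim that a crossing change induces an isomorphism on $\SHI(\,\cdot\,;\Ga(h))$ for $h\notin\{\pm1\}$ is only asserted. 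You attribute it to ``a standard neck-stretching computation'' and then, two sentences later, re-label the identification of the round-trip scalar and its zero locus as ``the main technical obstacle'' to be dealt with --- but this computation is the entire content of the lemma; without it nothing has been proved, and the sharpness of the hypothesis $h\neq\pm1$ is exactly what must be extracted from it.

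Moreover, the mechanism you propose is misstated. The unoriented skein exact triangle relates a diagram with a chosen crossing to its \emph{two resolutions}; $L_+$ and $L_-$ (the links before and after a crossing change) are not the two resolutions of a single crossing, so the triangle does not directly produce the pair of maps $f,g$ between $\SHI(L_+;\Ga(h))$ and $\SHI(L_-;\Ga(h))$ that you describe. In the actual argument (the one underlying \cite[Proposition 3.2]{XZ:forest} and its antecedents), the crossing change is realized by cobordism maps --- e.g.\ composites of saddle cobordisms through a common resolution, or the immersed annulus traced by the homotopy with one transverse double point --- and the round-trip composite is computed, via the double-point/blow-up relations for singular instantons with local coefficients, to be multiplication by an explicit element of $\bC[t,t^{-1}]$ whose zeros lie in $\{\pm1\}$; one must also check that these cobordisms carry compatible local systems even though the component structure changes (and argue separately, as you note, for crossings of $L$ through $T_0$, where only part of the link carries the nontrivial local system), and that the maps respect the simultaneous generalized eigenspaces of $\muu(R)$ and $\mu(p)$ cutting out $\SHI$. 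None of this is carried out in your proposal. So either perform that computation in full, or do what the paper does and simply invoke \cite[Proposition 3.2]{XZ:forest}, which packages exactly this statement.
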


\begin{proof}
	This is an immediate consequence of \cite[Proposition 3.2]{XZ:forest}.
\end{proof}

\begin{lem}
\label{lem_add_unknot_local_coef}
	Suppose $T$ is a tangle in $(M,\ga)$ and $U$ is an unknot in $M\backslash T$. Then for every $h\in \bC$, we have
	$$
	\dim_\bC\SHI(M,\ga,T\cup U,\Ga_U(h)) = 2\cdot  \dim_\bC\SHI(M,\ga,T).
	$$
\end{lem}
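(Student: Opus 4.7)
The strategy is to put $U$ in a standard local position and then compute via a connect-sum/splitting argument.

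\textit{Step 1 (Reduction to standard position).} Since $U$ is an unknot in $M\setminus T$, it bounds an embedded disk $D\subset M\setminus T$. A closed regular neighborhood $B$ of $D$ is a 3-ball in $M\setminus T$ (which can be chosen disjoint from $\ga$) inside which $U$ is ambient-isotopic to a small unknotted circle. Such an ambient isotopy of $U$ supported in $M\setminus(T\cup\ga)$ extends to the identity on $T\cup\ga$ and hence induces an isomorphism on $\SHI(M,\ga,T\cup U;\Ga_U(h))$ for every $h\in\bC$. So it suffices to prove the lemma in the standard case where $U$ is a small unknotted circle lying in a 3-ball $B\subset M\setminus T$ disjoint from $\ga$. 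Notice that this reduction works uniformly in $h$ and does not invoke Lemma \ref{lem:_isomorphism_class_of_SHI_with local coefficient is invariant under homotopy}.

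\textit{Step 2 (Connect-sum splitting).} In the standard position, choose a closure $(Y,R,L',u)$ of $(M,\ga,T\cup U)$ in which $B\subset Y$ is disjoint from $R$, from $u$, and from a chosen point $p\in Y\setminus L'$. Then $\Sigma:=\partial B$ is a separating $2$-sphere in $Y$ with $U$ alone on one side and $R$, $u$, $L'\setminus U$, $p$ on the other. Capping off along $\Sigma$ presents $(Y,L')$ as a connect sum $(Y_1,L'\setminus U)\,\#\,(S^3,U)$ in which the local system $\Ga_U(h)$ is supported in the $S^3$ summand and both operators $\muu(R)$ and $\mu(p)$ act only on the $Y_1$ summand. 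Applying the standard splitting theorem for instanton Floer homology along a $2$-sphere, adapted to the local-coefficient setup of \cite{KM:Kh-unknot} and \cite{XZ:forest}, yields
\[
\II(Y,L',u;\Ga_U(h)) \;\cong\; \II(Y_1,L'\setminus U,u)\otimes_\bC \II(S^3,U;\Ga_U(h)),
\]
and intersecting with the prescribed generalized eigenspaces of $\muu(R)$ and $\mu(p)$ on the first factor gives
\[
\SHI(M,\ga,T\cup U;\Ga_U(h)) \;\cong\; \SHI(M,\ga,T)\otimes_\bC \II(S^3,U;\Ga_U(h)).
\]

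\textit{Step 3 (The unknot factor).} It remains to verify $\dim_\bC \II(S^3,U;\Ga_U(h))=2$ for every $h\in\bC$. For $h=1$ this is the standard computation of the instanton Floer homology of the unknot in $S^3$. For $h\neq\pm1$ one may deduce the identity either by a direct model calculation (the local system has a very simple form on the complement of a standard unknot) or by combining Lemma \ref{lem:_isomorphism_class_of_SHI_with local coefficient is invariant under homotopy} with the limit estimate in Lemma \ref{lem:_limit_as_h_to_1} to transport the rank $2$ from $h=1$ to general $h$. The remaining value $h=-1$ can be handled by a symmetry $h\leftrightarrow -h$ coming from an orientation change on $U$, or by a limit argument analogous to Lemma \ref{lem:_limit_as_h_to_1}. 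Combined with Step 2 this proves the lemma.

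The principal obstacle I anticipate is Step 2: one must verify that the connect-sum tensor-product factorization is actually available in the precise framework of $\II(Y,L,u;\Ga)$ with an $\omega$-type local system, and then check that it is compatible with the simultaneous generalized eigenspaces of $\muu(R)$ and $\mu(p)$. The degenerate values $h=\pm1$, where the defining relations for $\Ga_U$ specialize, introduce a further mild technical issue to be taken care of in Step 3.
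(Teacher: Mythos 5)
Your overall strategy (split the unknot off into a standard local piece and identify a universal multiplicative factor) is in the right spirit, but both of the places you flag as "technical issues" are in fact genuine gaps, and neither is fixable by the tools you cite. In Step 2 there is no "standard splitting theorem" giving
$\II(Y,L',u;\Ga_U(h)) \cong \II(Y_1,L'\setminus U,u)\otimes_\bC \II(S^3,U;\Ga_U(h))$
in this framework: the $S^3$ summand carries no $u$--data, so the bundle there is non-admissible, reducible (in particular trivial) connections obstruct any naive K\"unneth formula along a separating sphere, and the would-be factor $\II(S^3,U;\Ga_U(h))$ is not even defined in the admissible-bundle setup used for $\II(Y,L,u)$. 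This is exactly why the paper does not split at the level of closed manifolds: it removes a ball $B$ from $M$ and a ball $B'$ from $S^3$, adds sutures $\delta,\delta'$, and applies the \emph{sutured} connected sum formula of \cite[Proposition 4.15]{L18-contact} twice to $(M\backslash B,\ga\cup\delta, T\cup U)$, once grouping $U$ with $(M,\ga,T)$ and once grouping $U$ with the $S^3$ side. Comparing the two expressions yields $\dim_\bC\SHI(M,\ga,T\cup U;\Ga_U(h)) = c(h)\cdot\dim_\bC\SHI(M,\ga,T)$ with $c(h)=\dim_\bC\SHI(S^3\backslash B',\delta',U';\Ga_{U'}(h))$, a universal function of $h$ only.

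Step 3 is the second gap: your proposed arguments cannot show the unknot factor is $2$ for all $h$. Lemma \ref{lem:_limit_as_h_to_1} is only an upper bound ($\limsup$ as $h\to 1$), so it cannot "transport" rank $2$ from $h=1$ to nearby $h$, let alone to arbitrary $h$; Lemma \ref{lem:_isomorphism_class_of_SHI_with local coefficient is invariant under homotopy} compares two homotopic links at a \emph{fixed} $h\notin\{\pm1\}$ and says nothing about varying $h$; and reversing the orientation of $U$ induces $h\mapsto h^{-1}$, not $h\mapsto -h$, so it does not reach $h=-1$ from $h=1$. A genuine computation is required, and this is what the paper supplies: since the identity $\dim_\bC\SHI(M,\ga,T\cup U;\Ga_U(h))=c(h)\cdot\dim_\bC\SHI(M,\ga,T)$ holds for \emph{all} triples (including $T=\emptyset$), one can evaluate it on a case where the left side is already known, namely the annular instanton Floer homology $\AHI$ with local coefficients computed in \cite[Example 3.4]{XZ:forest}; this forces $c(h)=2$ for every $h\in\bC$, including the degenerate values $h=\pm1$. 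Without some such explicit input, your Step 3 does not close.
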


\begin{proof}
Let $B$ be a $3$--ball in $M\backslash(T\cup U)$, let $\delta$ be a simple closed curve on $\partial B$. Let $U'\subset S^3$ be an unknot in $S^3$, let $B'$ be a  $3$--ball in $S^3\backslash U'$, let $\delta'$ be a simple closed curve on $\partial B'$.
	By the connected sum formula for $\SHI$ (c.f. \cite[Proposition 4.15]{L18-contact}), we have
\begin{multline*}
	\dim_{\mathbb{C}}\SHI(M\backslash B, \ga\cup \delta, T\cup U;\Ga_U(h))  \\
	 = 2\cdot\dim_\bC\SHI(M, \ga, T\cup U; \Ga_U(h)) \cdot \dim_\bC\SHI(S^3\backslash B', \delta'),
\end{multline*}
and
\begin{multline*}
	\dim_{\mathbb{C}}\SHI(M\backslash B, \ga\cup \delta, T\cup U;\Ga_U(h)) \\
	= 2\cdot\dim_\bC\SHI(M, \ga, T) \cdot \dim_\bC\SHI(S^3\backslash B', \delta',U'; \Ga_{U'}(h)).
\end{multline*}
Therefore
\begin{align*}
	& \dim_\bC\SHI(M, \ga, T\cup U; \Ga_U(h)) \cdot \dim_\bC\SHI(S^3\backslash B', \delta')\\
=~ & \dim_\bC\SHI(M, \ga, T) \cdot \dim_\bC\SHI(S^3\backslash B', \delta',U'; \Ga_{U'}(h)).
\end{align*}
Since $(S^3\backslash B', \delta')$ is a product sutured manifold, we have 
$$\dim_\bC\SHI(S^3\backslash B', \delta') =1.$$
Let
$$c(h) = \dim_\bC\SHI(S^3\backslash B', \delta',U'; \Ga_{U'}(h)),$$ 
then 
$c(h)$ is a function 
depending on $h$, and we have 
$$
\dim_\bC\SHI(M,\ga,T\cup U,\Ga_U(h)) =  c(h)\cdot  \dim_\bC\SHI(M,\ga,T).
$$
for all $M,\ga,T$. Recall that $T$ is allowed to be empty. Now we invoke the computations of annular instanton Floer homology $\AHI$ with local coefficients from \cite[Example 3.4]{XZ:forest}. Since $\AHI$ can be regarded as a special case of sutured instanton Floer homology, by \cite[Example 3.4]{XZ:forest}, we must have $c(h)=2$ for all $h$. 
\end{proof}

\begin{rem}
The $\bC[X]/(X^2)$--module structure introduced in Section \ref{subsec_module_structure} cannot be straightforwardly extended to 
	 $\SHI$ with local coefficients. The reason is that \cite[(2)]{XZ:forest} may no longer hold for instanton Floer homology with local coefficients, so we may not have $\sigma(q)^2 = 0$ on $\SHI(M,\ga,T; \Ga_L(h))$.
\end{rem}

\section{Properties of Floer minimal knots}

\subsection{Irreducibility and module structure}
This subsection proves the following theorem. The result is inspired by the work of Wang \cite{wang2021link}. 
\bthm\label{thm: free implies reducible, case of one taut sutured manifolds}
Suppose $(M,\ga)$ is a balanced sutured manifold with
\begin{equation}
\label{eqn_SHI_non_zero_irreducible}
	\SHI(M,\gamma)\neq 0
\end{equation} 
and $K$ is a knot in the interior of $M$. View $K$ as a balanced tangle in $(M,\ga)$. If $M\backslash K$ is irreducible, then 
$\SHI(M,\ga,K)$ is not free as a $\mathbb{C}[X]/(X^2)$--module.
\ethm

\bpf
Since $M\backslash K$ is irreducible, by \eqref{eqn_SHI_non_zero_irreducible} and the adjunction inequality for instanton Floer homology \cite[Theorem 7.21]{KM:suture}, we conclude that $(M,\ga)$ is $K$--taut.

Let
$$(M,\ga,K)\stackrel{S_1}{\leadsto}(M_1,\ga_1,K)\leadsto...\leadsto(M_{n-1},\ga_{n-1},K)\stackrel{S_n}{\leadsto}(M_n,\ga_n,T)$$
be the sutured hierarchy given by Theorem \ref{thm: sutured manifold hierarchy}. In particular, $T$ is a vertical tangle and $(M_{n},\ga_{n})$ is $T$-taut.

Assume $\SHI(M,\ga,K)$ is a free $\mathbb{C}[X]/(X^2)$-module. 
Applying Corollary \ref{cor_free_summand_under_surface_decomposition} repeatedly and applying Theorem \ref{thm: nonvanishing of SHI}, we conclude that for every point $q\in T$, the module structure on $\SHI(M_n,\ga_n,T)$ induced by $q$ is a non-trivial free $\mathbb{C}[X]/(X^2)$--module. 

Pick any closure $(Y,R,L,u)$ of $(M_n,\ga_n,T)$ such that $|R\cap L|$ is odd and at least $3$. 
Let $N(R)\cong [-1,1]\times R$ be a closed tubular neighborhood of $R$ in $Y$.
Since $R\cap T\neq\emptyset$, we can choose $q$ so that it is contained in $N(R)$. Isotope the arc $u$ so that $u$ intersects $R$ transversely at one point $p_0$.  To simplify  notation, we will identify $N(R)$ with $[-1,1]\times R$.
 After shrinking $N(R)$ and taking isotopies for $L$ and $u$, we may assume that 
\begin{enumerate}
	\item $L\cap N(R)$ is given by $[-1,1]\times\{p_1,...,p_m\}\subset [-1,1]\times R$
	\item $u\cap N(R)$ is given by $[-1,1]\times\{p_0\}\subset [-1,1]\times R$.
\end{enumerate} 
 
 Now we can cut $Y$ open along the two surfaces $\{-1\}\times R$ and $\{1\}\times R$, and re-glue via the diffeomorphism from $\{-1\}\times R$ to $\{1\}\times R$ defined by the identity map on $R$. This yields a disjoint union $Y_1\sqcup Y_2$, where 
 \begin{enumerate}
 	\item  $Y_1$ is given by $(Y\backslash [-1,1]\times R)/\sim$, where $\sim$ is the identification of $\{-1\}\times R$ with $\{1\}\times R$,
 	\item $Y_2$ is given by $([-1,1]\times R)\slash\sim$, where $\sim$ is the same identification as above.
 \end{enumerate}
Then $Y_1$ is diffeomorphic to $Y$, and $Y_2$ is diffeomorphic to $S^1\times R$. 
 
Let $R_1$ be the image of $\{\pm 1\}\times R$ in $Y_1$, let $L_1$, $u_1$ be the images of $L$ and $u$ in $Y_1$ respectively.
Clearly $(Y_1,R_1,L_1,u_1)$ is diffeomorphic to $(Y,R,L,u)$. 
In $Y_2$, let $R_2=\{0\}\times R$, let $L_2=S^1\times\{p_1,...,p_n\}$, and let $u_2=S^1\times \{p_0\}$. As in \cite[Theorem 6.4]{XZ:excision}, there is a cobordism $W$ from $Y_1\sqcup Y_2$ to $Y$ that induces an isomorphism (as $\bC$--linear spaces)
$$e:\II(Y_1,L_1,u_1|R_1)\otimes \II(Y_2,L_2,u_2|R_2)\xra{\cong} \II(Y,L,u|R),$$
where the notation $\II(Y_i,L_i,u_i|R_i)$ is defined in \cite[Definition 6.3]{XZ:excision}.
Recall that we assumed the base point $q$ is in the neighborhood $[-1,1]\times R$. Let $q_2\in Y_2$ be the image of $q$. Then $\sigma(q_2)$ acts on $I(Y_2,L_2,u_2|R_2)$. By Section \ref{subsec_module_structure}, we have $\sigma(q_2)^2=0$. On the other hand, by \cite[Proposition 6.5]{XZ:excision},
$$\II(Y_2,L_2,u_2|R_2)\cong\mathbb{C}.$$
This implies that $\sigma(q_2)=0$ on $\II(Y_2,L_2,u_2|R_2)$, so $\sigma(q_2)$ defines the zero action on the tensor product
$$\II(Y_1,L_1,u_1|R_1)\otimes \II(Y_2,L_2,u_2|R_2).$$
However, the cobordism map $e$ intertwines the $\sigma(q_2)$--action on
$$\II(Y_1,L_1,u_1|R_1)\otimes \II(Y_2,L_2,u_2|R_2)$$
with the $\sigma(q)$--action on $\SHI(M_n,\ga_n,T)$. Thus we conclude that the action of $X$ (which equals $\sigma(q)$) on $\SHI(M_n,\ga_n,T)$ must also be zero. 
As a consequence, the module structure on $\SHI(M_n,\ga_n,T)$ induced by $q$ is not a free $\mathbb{C}[X]/(X^2)$--module,
which yields a contradiction.
\epf

\subsection{Floer minimality and module structure}
Recall that for a sutured manifold $(M,\ga)$, the group $\pi_1'(M)$ is defined by Equation \eqref{eqn_def_pi'} as a quotient group of $\pi_1(M)$. The main result of this subsection is the following theorem.

\begin{thm} 
\label{thm_floer_simple_implies_SHI_free}
Let $(M,\ga)$ be a balanced sutured manifold. Suppose $K$ is a knot in the interior of $M$ that represents the zero element in $\pi_1'(M)$. If $K$ is instanton Floer minimal (see Definition \ref{def_instanton_minimal}), then $\SHI(M,\ga,K)$ is a free module over $\bC[X]/(X^2)$ (see Section \ref{subsec_module_structure}).
\end{thm}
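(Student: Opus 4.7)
Let $q\in K$ and write $V:=\SHI(M,\ga,K)$ equipped with the $\bC[X]/(X^2)$-module structure induced by $X:=\sigma(q)$ (Section \ref{subsec_module_structure}). Since $X^2=0$, one has $XV\subseteq\ker X$ and $\dim V=\dim\ker X+\dim XV$; hence $V$ is free over $\bC[X]/(X^2)$ if and only if $\dim V=2\dim\ker X$. My plan is to sandwich $\dim V$ between $2\dim\SHI(M,\ga)$ from above and below. For the upper bound I would refine the inequality \eqref{eq_SHIK>SHI} to an identity
\begin{equation}\label{plan:cone}
\dim\SHI(M\backslash N(K),\ga_K)=2\dim\ker X
\end{equation}
by constructing an exact triangle
$$
V\xrightarrow{\;X\;}V\longrightarrow\SHI(M\backslash N(K),\ga_K)\longrightarrow V[1]
$$
in sutured instanton Floer homology, whose third vertex models the geometric operation of replacing an arc of $K$ through $q$ by a meridional suture on $\partial N(K)$, and whose connecting map is (up to a nonzero scalar) the $\sigma(q)$-action of Section \ref{subsec_module_structure}. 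This would be the natural refinement of the cobordism argument behind \cite[Proposition 3.14]{LY-HeegaardDiagram}. Granting \eqref{plan:cone}, the Floer minimality of $K$ gives $\dim\ker X=\dim\SHI(M,\ga)$, and hence $\dim V\leq 2\dim\SHI(M,\ga)$.

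For the matching lower bound $\dim V\geq 2\dim\SHI(M,\ga)$, I would use the local-coefficient machinery of Section \ref{subsec_SHI_with_local_coefficients}. Lemma \ref{lem_add_unknot_local_coef} applied with $T=\emptyset$ gives
$$
\dim\SHI(M,\ga,U;\Ga_U(h))=2\dim\SHI(M,\ga)
$$
for every unknot $U\subset M$ and every $h\in\bC$. The hypothesis $[K]=0\in\pi_1'(M)$ says that $K$ is null-homotopic in $M$ modulo conjugates of the suture curves. Using Lemma \ref{lem_add_unknot_local_coef} to freely introduce auxiliary unknots that absorb these suture contributions without changing the dimension of $\SHI$ with local coefficients, together with the homotopy invariance of Lemma \ref{lem:_isomorphism_class_of_SHI_with local coefficient is invariant under homotopy} (valid for $h\neq\pm 1$), I would reduce the computation of $\dim\SHI(M,\ga,K;\Ga_K(h))$ to that of an unknot, yielding the value $2\dim\SHI(M,\ga)$ for generic $h$. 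Lemma \ref{lem:_limit_as_h_to_1} then gives the desired bound on $\dim V$.

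The two bounds together force $\dim V=2\dim\ker X$, which is precisely the freeness of $V$. The main technical obstacle is \eqref{plan:cone}: I need an exact triangle whose connecting map is honestly identified with the $\sigma(q)$-action, which requires a cobordism-level refinement of Li--Ye's argument and careful tracking of the module action across the triangle. A secondary subtlety is rigorously implementing the reduction to an unknot in the lower-bound step, since Lemma \ref{lem:_isomorphism_class_of_SHI_with local coefficient is invariant under homotopy} only handles honest homotopies in $M$, whereas the hypothesis $[K]=0\in\pi_1'(M)$ is a homotopy statement only modulo the sutures; bridging this gap via Lemma \ref{lem_add_unknot_local_coef} needs to be handled with care.
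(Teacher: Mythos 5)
Your sandwich strategy (freeness $\Leftrightarrow\dim_\bC\SHI(M,\ga,K)=2\dim_\bC\ker\sigma(q)$, upper bound from an exact triangle, lower bound from local coefficients) is exactly the architecture of the paper's proof, but the two steps you defer as ``technical obstacles'' are the entire mathematical content, and as sketched neither goes through. For the upper bound, the exact triangle with third vertex $\SHI(M\backslash N(K),\ga_K)$ and connecting map $\sigma(q)$ is precisely what the paper constructs, but not by refining the argument of \cite[Proposition 3.14]{LY-HeegaardDiagram}: it defines an auxiliary group $\SHI(M,\ga,K^{\natural})$ by adding a meridian $\mu$ of $K$ and an earring arc $v$ inside a closure, applies the Kronheimer--Mrowka unoriented skein triangle \cite{KM:Kh-unknot} to a crossing between $K$ and $\mu$, identifies the map in that triangle with $\pm\sigma(q)$ by citing \cite[Proposition 3.4]{Xie-earring}, and identifies $\SHI(M,\ga,K^{\natural})\cong\SHI(M\backslash N(K),\ga_K)$ by the argument of \cite[Proposition 1.4]{KM:Kh-unknot}. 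The identification of the connecting map with the point class $\sigma(q)$ is a nontrivial computation (the ``earring'' result), not something that currently follows from Li--Ye's inequality; asserting the triangle without these inputs is a genuine gap.

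For the lower bound $\dim_\bC\SHI(M,\ga,K)\ge 2\dim_\bC\SHI(M,\ga)$ (this is Lemma \ref{lem_rank_inequality_K_zero_in_pi'} of the paper), your proposed mechanism does not work as described: introducing split unknots via Lemma \ref{lem_add_unknot_local_coef} cannot ``absorb the suture contributions,'' because adding an unknot changes neither $\pi_1(M)$ nor the homotopy class of $K$, while Lemma \ref{lem:_isomorphism_class_of_SHI_with local coefficient is invariant under homotopy} requires an honest homotopy in the ambient manifold, not a homotopy modulo the sutures. The paper bridges this by changing the manifold, not the knot: each suture component is replaced by three parallel copies (with dimensions tracked via \cite[Theorem 3.1]{KM:Alexander}), and product disks $[-1,1]\times D_i$ are glued along the middle copies to form $(\hat M,\hat\ga)$ containing a vertical tangle $T$, with $\SHI(M,\ga',K)\cong\SHI(\hat M,\hat\ga,T\cup K)$ by \cite[Lemma 7.10]{XZ:excision}. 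In $\hat M$ the suture classes bound disks, so $[K]=0\in\pi_1'(M)$ yields an honest homotopy from $K$ to an unknot $U$; then Lemma \ref{lem_add_unknot_local_coef}, homotopy invariance for $h\neq\pm1$, and the semicontinuity Lemma \ref{lem:_limit_as_h_to_1} give the inequality. So the skeleton of your plan is the paper's, but both load-bearing steps require these specific constructions, and as written they remain gaps.
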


We first establish the following rank inequality for instanton homology.

\blem
\label{lem_rank_inequality_K_zero_in_pi'}
Suppose $(M,\ga)$ is a balanced sutured manifold, and $K$ is a knot in the interior of $M$ such that $K$ represents the zero element in $\pi_1'(M)$. Then we have
\begin{equation}
\label{eqn_rank_inequality_K_zero_in_pi'}
\dim_{\mathbb{C}}\SHI(M,\ga,K)\ge  2\cdot\dim_{\mathbb{C}}\SHI(M,\ga).
\end{equation}
\elem

\begin{proof}
Suppose $\ga=\ga_1\cup\cdots\cup\ga_m$ has $m$ components. For each $i$, let $\ga_i',\ga_i''$ be a pair of oppositely-oriented simple closed curves on $\partial M$ parallel to $\ga_i$. Moreover, let $\ga_i''$ have the same orientation as $\ga$, let $\ga'$ have the opposite orientation as $\ga$, and let $\ga'$ be placed between $\ga$ and $\ga''$. Let $\ga' := \cup_i(\ga_i\cup\ga_i'\cup\ga_i'')$. Let $U$ be an unknot in $M$. The proof of \cite[Theorem 3.1]{KM:Alexander} implies that
\begin{align}
	\dim_\bC \SHI(M,\ga') &= 2^m\cdot \dim_\bC\SHI(M,\ga) 
	\label{eqn_add_ga'_1}
	\\
	\dim_\bC \SHI(M,\ga',K) &= 2^m \cdot \dim_\bC\SHI(M,\ga,K) 
	\label{eqn_add_ga'_2} 
	\\
	\dim_\bC \SHI(M,\ga',U) &= 2^m\cdot \dim_\bC\SHI(M,\ga,U).
	\label{eqn_add_ga'_3}
\end{align}
Decompose $A(\ga')$ as $A(\ga) = \cup_i(A(\ga_i)\cup A(\ga_i')\cup A(\ga_i''))$. For each $i$, let $D_i$ be a disk, let $\varphi_i$ be an orientation-reversing diffeomorphism from $[-1,1]\times \partial D_i$ to $A(\ga_i')$, and let 
$$\hat M = M\cup_{\{\varphi_i\}} (\cup_i [-1,1]\times D_i) $$
Let $\hat \ga$ be the image of $\cup_i(\ga_i\cup\ga_i'')$ on $\partial \hat M$. Then every component of $\partial \hat M$ contains at least one component of $\hat \ga$. Moreover, it is straightforward to verify that $(\hat M, \hat \ga)$ is a balanced sutured manifold. Let $T_i=[-1,1]\times\{0\}\subset [-1,1]\times D_i$ be a tangle, we can view $T=T_1\cup...\cup T_m$ as a vertical tangle inside $(\hat{M},\hat{\gamma})$. Let $(\hat{M}_{T},\hat{\ga}_T)$ be the balanced sutured manifold obtained from $(\hat M,\hat \ga)$ by removing a neighborhood of $T$ and adding a meridian of every component of $T$ to the suture. It is straightforward to check that $(\hat{M}_{T},\hat{\ga}_T)$ is simply diffeomorphic to $(M,\ga')$. Hence by \cite[Lemma 7.10]{XZ:excision}, we have
$$
\SHI(M,\ga')\cong \SHI(\hat M, \hat \ga, T).
$$
In fact, the proof of \cite[Lemma 7.10]{XZ:excision} also applies verbatim to give
\begin{align*}
	\SHI(M,\ga',K) &\cong \SHI(\hat M,\hat \ga,T \cup K), \\
	\SHI(M,\ga',U) &\cong \SHI(\hat M,\hat \ga,T \cup U).
\end{align*}

By Lemma \ref{lem_add_unknot_local_coef}, we have
$$
\dim_{\mathbb{C}}\SHI(\hat M,\hat \ga, T\cup U;\Ga_U(h)) = 2\cdot\dim_{\mathbb{C}}\SHI(\hat M,\hat \ga, T) 
$$
for every $h\in \bC$. 
Since $K$ represents the zero element in $\pi_1'(M)$, it is homotopic to $U$ in $\hat M$.
By Lemma \ref{lem:_isomorphism_class_of_SHI_with local coefficient is invariant under homotopy}, when $h\notin \{\pm1\}$, we have 
$$
\dim_{\mathbb{C}}\SHI(\hat M,\hat \ga, K\cup T;\Ga_K(h)) 
 = \dim_{\mathbb{C}}\SHI(\hat M,\hat \ga, K\cup U; \Ga_U(h)). 
$$
Hence
$$
\dim_{\mathbb{C}}\SHI(\hat M,\hat \ga, K\cup T;\Ga_K(h)) = 2\cdot\dim_{\mathbb{C}}\SHI(\hat M,\hat \ga, T)
$$
for all $h\notin \{\pm1\}$.
Therefore, by Lemma \ref{lem:_limit_as_h_to_1},
\begin{align*}
\dim_{\mathbb{C}}\SHI(M,\ga',K) 
= ~ 
&\dim_{\mathbb{C}} \SHI(\hat M,\hat \ga,T \cup K) \\
\ge ~
&2\cdot\dim_{\mathbb{C}}\SHI(\hat M,\hat \ga, T) \\
= ~
&2\cdot\dim_{\mathbb{C}}\SHI(M,\ga').
\end{align*}
Thus by \eqref{eqn_add_ga'_1} and \eqref{eqn_add_ga'_2}, we have
$$
\dim_{\mathbb{C}}\SHI(M,\ga,K) 
\ge  2\cdot \dim_{\mathbb{C}}\SHI(M,\ga),
$$
and  the lemma is proved.
\end{proof}

Now we can prove Theorem \ref{thm_floer_simple_implies_SHI_free}.

\bpf[Proof  of Theorem \ref{thm_floer_simple_implies_SHI_free}]
	We define a new instanton Floer homology group $\SHI(M,\ga,K^{\natural})$ as follows. Regard $K$ as a tangle inside $(M,\ga)$. Pick a closure $(Y,R,L,u)$ of $(M,\ga,K)$. Recall that from the construction of closures in Section \ref{subsec: construction of SHI}, $L$ has the form
$$L=K\cup L_1,$$
where $L_1$ is the image of the arcs $[-1,1]\times \{p_1,...,p_n\} \subset [-1,1]\times F$ in the closure. Now define $L'=K\cup \mu\cup L_1$ to be a link in $Y$, where $\mu$ is a meridian of $K$, and define $u'=u\cup v$, where $v$ is an arc connecting $K$ with $\mu$.  Define
$$\SHI(M,\ga,K^{\natural}):=\II(Y,L',u'|R).$$

\begin{figure}[h]
\centering
\begin{overpic}[width=\textwidth]{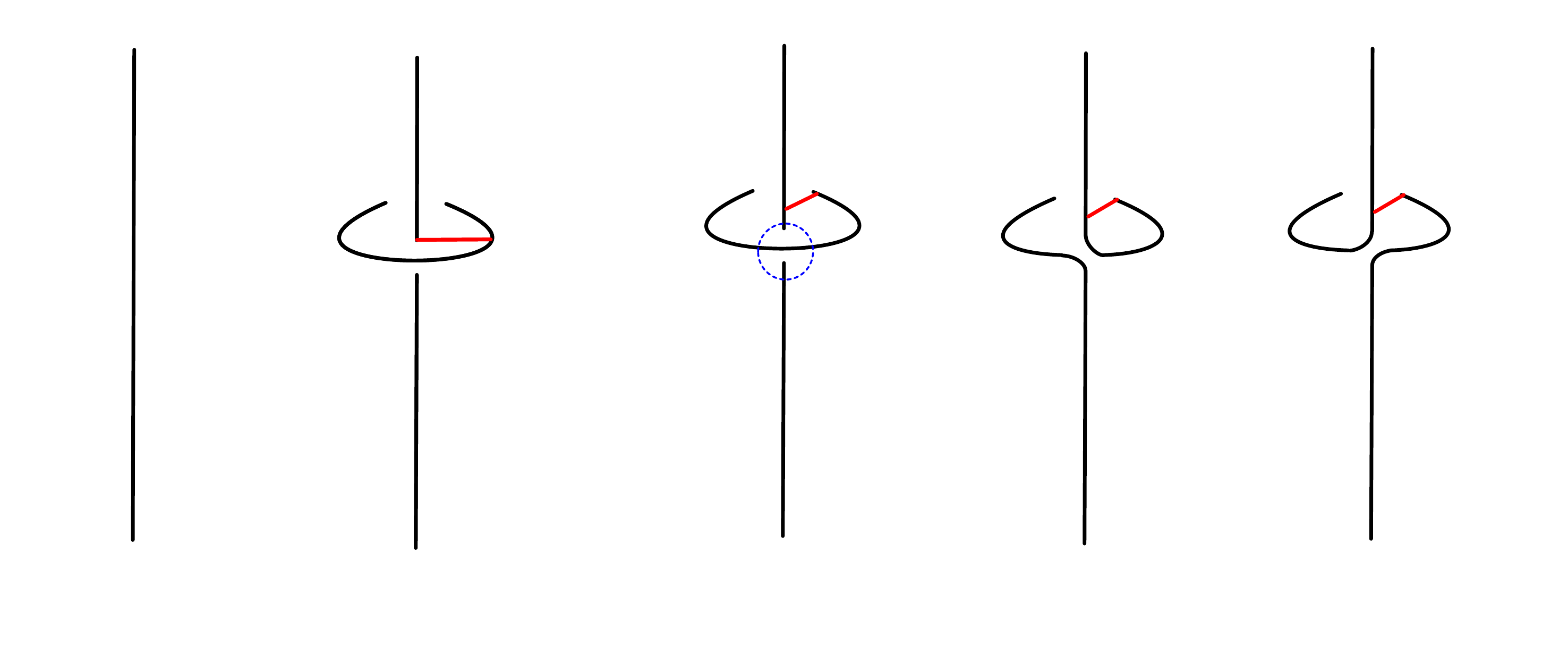}
	\put(7,2){$K$}
	\put(25,2){$K^{\natural}$}
	\put(19,26){$\mu$}
	\put(29,27){\line(1,1){4}}
	\put(33,31){\color{red}$v$}
	\put(48,2){$K^{\natural}$}
	\put(52,23){\color{blue}crossing}
	\put(60,2){$1$-resolution}
	\put(70,30){\color{red}$[v]=0$}
	\put(90,30){\color{red}$[v]=0$}
	\put(80,2){$0$-resolution}
\end{overpic}
\caption{The knot $K$, $K^{\natural}$, and the unoriented skein triangle.}\label{fig: K and K-natural}
\end{figure}

\brem
If we replace $(M,\ga)$ by a closed $3$-manifold $Y$ and disregard $R$, $L_1$, and $u$, then the above construction is the same as the definition of $\II^{\natural}(Y,K)$ introduced by Kronheimer and Mrowka in \cite{KM:Kh-unknot}.
\erem

Now apply the un-oriented skein triangle from \cite[Section 6 and 7]{KM:Kh-unknot} to a crossing between $K$ and $\mu$, as shown in Figure \ref{fig: K and K-natural}. The result is the following exact triangle
\begin{equation*}
	\xymatrix{
	\SHI(M,\ga,K)\ar[rr]^{f}&&\SHI(M,\ga,K)\ar[dl]\\
	&\SHI(M,\ga,K^{\natural})\ar[lu]&
	}
\end{equation*}
The map $f$ has been computed explicitly in \cite[Proposition 3.4]{Xie-earring}, and we have
$$f=\pm\sigma(q):\SHI(M,\ga,K)\ra \SHI(M,\ga,K)$$
for a base point $q\in K$. Since $\sigma(q)^2=0$, we have
\begin{equation}\label{eq: dimension inequality between K and K-natural}
	\dim_{\mathbb{C}}\SHI(M,\ga,K)\leq\dim_{\mathbb{C}}\SHI(M,\ga,K^{\natural}),
\end{equation}
and the equality holds if and only if
\begin{equation}\label{eq: sigma q is half rank}
{\rm rank}(\sigma(q))=\frac{1}{2}\dim_{\mathbb{C}}\SHI(M,\ga,K).
\end{equation}

Let $\ga_{K}$ be the union of $\ga$ with a pair of oppositely oriented meridians of $K$ on $\partial N(K)$. 
The proof of \cite[Proposition 1.4]{KM:Kh-unknot} (also \cite[Lemma 7.10]{XZ:excision}) applies verbatim to give the following isomorphism
\begin{equation}
\label{eqn_K_natural_iso_suture}
	\SHI(M,\ga,K^{\natural})\cong \SHI(M\backslash N(K),\ga_K).
\end{equation}
Recall that $K$ is assumed to be instanton Floer minimal, namely
\begin{equation}\label{eq: dimension relation between K and emptyset}
\dim_{\mathbb{C}}\SHI(M\backslash N(K),\ga_K) = 2\cdot \dim_{\mathbb{C}}\SHI(M,\ga).
\end{equation}
By Lemma \ref{lem_rank_inequality_K_zero_in_pi'}, we have
\begin{equation}
\label{eqn_rank_inequality_remove_contractible_knot}
	\dim_{\mathbb{C}}\SHI(M,\ga,K)\geq 2\cdot\dim_{\mathbb{C}}\SHI(M,\ga).
\end{equation}
By \eqref{eq: dimension inequality between K and K-natural}, \eqref{eqn_K_natural_iso_suture}, \eqref{eq: dimension relation between K and emptyset}, \eqref{eqn_rank_inequality_remove_contractible_knot}, we conclude that
$$\dim_{\mathbb{C}}\SHI(M,\ga,K)=\dim_{\mathbb{C}}\SHI(M,\ga,K^{\natural}),$$
therefore the equality holds for (\ref{eq: dimension inequality between K and K-natural}), and hence (\ref{eq: sigma q is half rank}) holds. Recall that  the action of $X$ equals $\sigma(q)$ in the $\mathbb{C}[X]/(X^2)$--module structure of $\SHI(M,\ga,K)$. Hence by (\ref{eq: sigma q is half rank}), $\SHI(M,\ga,K)$ is a free $\mathbb{C}[X]/(X^2)$-module.
\epf

\subsection{Proofs of the main results} 
\label{subsec_proof_thm}

This subsection proves Theorems \ref{thm_instanton_minimal_in_Y}, \ref{thm_instanton_minimal_unknot}, and \ref{thm_KHI_unknot}.

\begin{proof}[Proof of Theorem \ref{thm_instanton_minimal_in_Y}]
By Theorem \ref{thm_floer_simple_implies_SHI_free}, $\SHI(M,\ga,K)$ is a non-trivial free module over $\bC[X]/(X^2)$. Hence by Theorem \ref{thm: free implies reducible, case of one taut sutured manifolds}, $M\backslash K$ is reducible.

Therefore, $M$ decomposes as $M=M_1\# M_2$, where $K\subset M_2$ after isotopy. If $M_2$ is a closed manifold, then the desired result holds. If $M_2$ is a manifold with boundary,
since $\SHI(M, \ga)\neq 0$, by \cite[Theorem 7.21]{KM:suture}, $R_+(\ga)$ and $R_-(\ga)$ are both norm-minimizing, therefore $M_2$ is also a balanced sutured manifold.
By the connected sum formula for $\SHI$, we have 
$$
\dim_\bC \SHI(M_2,\ga,K) = 2\cdot \dim_\bC\SHI(M_2,\ga).
$$
Hence the desired result follows from induction on the number of factors in the prime decomposition of $M$.
\end{proof}

\begin{proof}[Proof of Theorem \ref{thm_instanton_minimal_unknot}]
By the connected sum formula for $\SHI$ and the non-vanishing theorem for taut sutured manifolds \cite[Theorem 7.12]{KM:suture}, we have $\SHI(M,\ga)\neq 0$.

By Theorem \ref{thm_instanton_minimal_in_Y}, there exists a $3$--ball $B^3\subset M$ such that $K\subset B^3$. Hence we can view $K$ as a knot in the northern hemisphere of $S^3$.
Define $S^3(K) = S^3\backslash N(K)$, and let $\ga_\mu\subset \partial S^3(K)$ be a pair of oppositely oriented meridians of $K$. Let $\ga_K\subset \partial(M\backslash N(K))$ be the union of $\ga$ and a pair of oppositely oriented meridians on $\partial N(K)$. 
The connected sum formula for $\SHI$ implies
$$\dim_{\mathbb{C}}\SHI(M\backslash N(K),\ga_K)=2\cdot \dim_{\mathbb{C}}\SHI(M,\ga)\cdot \dim_{\mathbb{C}}\SHI(S^3(K),\ga_{\mu}).$$
Therefore $K$ is instanton Floer minimal if and only if
$$\II^\natural(S^3,K) \cong \SHI(S^3(K),\ga_{\mu})\cong \mathbb{C}$$
By the unknot detection result for $\II^\natural(S^3,K)$ \cite[Proposition 7.16]{KM:suture}, the above equation holds if and only if $K$ is the unknot.
\end{proof}

\begin{proof}[Proof of Theorem \ref{thm_KHI_unknot}]
	Let $M=S^3\backslash N(L)$, then the prime decomposition of $M$ does not contain any non-trivial closed component.
	Let $\ga\subset \partial M$ be the suture consisting of a pair of oppositely oriented meridians for each component of $L$, then $\pi_1'(M) = 0$. By \cite[Proposition 7.16]{KM:suture}, we have $\SHI(M,\ga)\neq 0$, therefore $M$ satisfies the conditions of Theorem \ref{thm_instanton_minimal_unknot}, and hence \eqref{eqn_KHI_unknot} holds if and only if $K$ is the unknot in $M$.
\end{proof}

\bibliographystyle{amsalpha}
\bibliography{references}

\end{document}